\newtheorem{theorem}{Theorem}[section]
\newtheorem{lemma}[theorem]{Lemma}
\newtheorem{claim}[theorem]{Claim}
\newtheorem{definition}[theorem]{Definition}
\newcommand\conv{\mathrm{conv}}
\newcommand\eps\varepsilon
\def \reals{{\mathbb{R}}}
\def \A{{\mathcal{A}}}
\begin{document}

\begin{titlepage}

\title{Planar point sets determine many pairwise crossing segments}

\date{}

\author{J\'anos Pach\thanks{R\'enyi Institute, Budapest and MIPT, Moscow. Supported by NKFIH grants K-131529, KKP-133864, Austrian Science Fund Z 342-N31, Ministry of Education and Science of the Russian Federation MegaGrant No. 075-15-2019-1926, ERC Advanced Grant ``GeoScape.'' Email:
{\tt pach@cims.nyu.edu}.}
  \and Natan Rubin\thanks{Ben Gurion University of the Negev, Beer-Sheba, Israel. Ralph Selig Career Development Chair in Information Theory. The project leading to this application has received funding from European Research Council (ERC)
under the European Unions Horizon 2020 research and innovation programme under grant agreement No. 678765. Also supported
by grant 1452/15 from Israel Science Foundation. Email: {\tt rubinnat.ac@gmail.com}}
  \and G\'abor Tardos\thanks{R\'enyi Institute, Budapest and MIPT, Moscow. Supported by the ``Lend\"ulet'' Project in Cryptography of the Hungarian Academy of Sciences, the National Research, Development and Innovation Office NKFIH projects K-116769, SNN-117879, K-132696, KKP-133864 and SNN-13564, the ERC Advanced Grant ``GeoSpace'' and by the Ministry of Education and Science of the Russian Federation MegaGrant No. 075-15-2019-1926. Email: {\tt tardos@renyi.hu}}}

\clearpage\maketitle
\thispagestyle{empty}

\begin{abstract}
We show that any set of $n$ points in general position in the plane determines $n^{1-o(1)}$ pairwise crossing segments. The best previously known lower bound, $\Omega\left(\sqrt n\right)$, was proved more than 25 years ago by Aronov, Erd\H os, Goddard, Kleitman, Klugerman, Pach, and Schulman. Our proof is fully constructive, and extends to dense geometric graphs.
\end{abstract}

\end{titlepage}

\section{Introduction}
Let $V$ be a set of $n$ points in general position in the plane, that is, assume that no $3$ points of $V$ are collinear. A {\em geometric graph} is a graph $G=(V,E)$ whose vertex set is $V$ and whose edges are represented by possibly crossing straight-line segments connecting certain pairs of points in $V$. If every pair of points in $V$ is connected by a segment, we have $E={V\choose 2}$, and $G$ is called a {\em complete geometric graph}. Two edges $pq, p'q'\in E$ are said to {\em cross} if the corresponding segments share an interior point. {\em Topological graphs} are defined similarly, except that their edges can be represented by any Jordan curves that have no interior points that belong to $V$. In the present paper, we investigate crossing patterns of curves, mainly segments. The above general position assumptions will simplify the presentation, but our theorems hold without them. In particular, for our purposes, it is sufficient to consider topological graphs whose edges are non-selfintersecting polygonal curves.

\smallskip
\noindent{\bf Crossing patterns and intersection graphs.} Finding maximum cliques or independent sets in intersection graphs of segments, rays, and other convex sets in the plane is a computationally hard problem and a classic topic in computational and combinatorial geometry \cite{AgMu, CaCa, FP11, Chaya,KrM, KrNe}. There are many interesting Ramsey-type problems and results about the existence of large cliques {\em or} large independent sets in intersection graphs of segments  \cite{Semi, CPS, Ky, LMPT, PKK} and, more generally, of Jordan curves (``strings'') \cite{FP10,FPT11,FP12}. Some of these questions are intimately related to counting incidences between points and lines \cite{PachSharir, Szekely, SzT}, and to bounding the complexity of $k$-levels in arrangements of lines in $\reals^2$ \cite{Dey}.
\smallskip

It appears to be a somewhat simpler task to understand the {\it combinatorial structure} of crossings between the edges of a geometric or topological graph. Despite decades of steady progress, we have very few asymptotically tight results in this direction.
Perhaps the best known and most applicable theorem of this kind is the so-called {\em Crossing Lemma} of Ajtai, Chv\'atal, Newborn, Szemer\'edi~\cite{Crossing1} and Leighton~\cite{Crossing2}, which states that any topological graph $G=(V,E)$ with $|E|>4|V|$ determines at least $\Omega\left(|E|^3/|V|^2\right)$ crossing pairs of edges. Recently, a similar result has been established by the authors for contact graphs of families of Jordan curves \cite{PRTadv}.
\smallskip

According to another asymptotically tight result, for $t>1$, every geometric graph $G$ with $|E|\geq nt$ edges has two disjoint sets of edges, $E_1,E_2\subset E$, each of size $\Omega(t)$, such that every edge in $E_1$ crosses all edges in $E_2$; see, e.g., \cite[Theorem 6]{FP10}. A similar theorem holds for topological graphs, with the difference that then $|E_1|, |E_2|= \Omega\left(t/\log t\right)$ \cite{FP12}.
It is a major unsolved question to decide whether under these circumstances $G$ must also contain a family of {\em pairwise crossing} edges, whose size tends to infinity as $t\rightarrow\infty$; see~\cite{Book}, Chapter 9.6, Problem 1. It is conjectured that one can always choose such a family consisting of almost $t$ edges. If this stronger conjecture is true for $t^{1-o_t(1)}$ edges, then for $t\approx n/2$, it would imply that every {\em complete} geometric or topological graph on $n$ vertices has $n^{1-o(1)}$ pairwise crossing edges (cf.~\cite{Book}, Chapter 9.6, Problem 4).

A topological graph is called {\em $t$-quasi-planar} if it contains no $t$ pairwise crossing edges. Let $f_t(n)$ (and $f'_t(n)$) denote the maximum number of edges that a $t$-quasi-planar geometric (resp., topological) graph of $n$ vertices can have. Clearly, we have $f_t(n)\le f'_t(n)$, for every $t$ and $n$. For geometric graphs, Valtr~\cite{Va3} proved that $f_t(n)=O_t(n\log n)$, but in general the best known upper bound is only $f'_t(n)=O_t\left(n(\log n)^{O(\log t)}\right)$ \cite{FP10,FP12a,FP14}. It is conjectured that $f_t(n)\le f'_t(n)\le t^{1+o_t(1)}n$, which is known to be true only for $t\le 4$; see \cite{Eyal2, Eyal1, AAPP}.

\smallskip
\noindent{\bf Our results.}  The aim of the present paper is to find many pairwise crossing edges in dense geometric graphs. For every $n\geq 2$, let $T(n)$ denote the largest positive integer $T$ with the property that any complete geometric graph with $n$ vertices has at least $T$ pairwise crossing edges. Equivalently, $T(n)$ is the largest number such that any set $V$ of $n$ points in general position in the plane determines at least $T$ pairwise crossing segments. (A segment is {\em determined} by $V$ if both of its endpoints belong to $V$.)

It was proved by Aronov, Erd\H{o}s, Goddard, Kleitman, Klugerman, Pach, and Schulman \cite{CrossingFamilies} ipn 1991 that $T(n)=\Omega(\sqrt{n})$, cf. \cite{Va2}. Since then no one has been able to improve this bound. The prevailing conjecture is that $T(n)=\Theta(n)$ \cite{Book}.
A slightly sublinear lower bound holds for ``uniformly distributed'' point sets, in which the ratio of the largest distance and the smallest distance between two points is $O(\sqrt{n})$ \cite{Va1}. Note that a very recent construction by Evans and Saeedi \cite{EvSa} yields $T(n)\leq 5\lceil n/24 \rceil$, which was lately improved to $T(n)\le \lceil n/5\rceil$ by a group of Czech and Austrian researchers \cite{Ai}.

Our main theorem comes close to settling the above conjecture in the affirmative, and applies in a more general setting.

\begin{theorem}\label{Thm:Main}
(i) Any set $V$ of $n$ points in general position in the plane determines at least $n/2^{O(\sqrt{\log n})}$ pairwise crossing segments.

(ii) There exists an absolute constant $c$ such that any geometric graph with $n$ vertices and at least $n^{2-\eps}$ edges with $\eps>(\log n)^{-2/3}$ has at least $n^{1-c\sqrt\eps}$ pairwise crossing edges.
\end{theorem}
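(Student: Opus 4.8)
\emph{Sketch of the approach.}
The plan is to prove part~(i) through a self-improving recursion of the shape $T(n)\ge \sqrt n\cdot T(\sqrt n)/2^{O(\sqrt{\log n})}$; since the per-step losses form a geometric series in $\sqrt{\log n}$, this unrolls to $T(n)\ge n/2^{O(\sqrt{\log n})}$. Part~(ii) is the same scheme run with a ``density deficiency'' parameter $\eps$, where $|E|=n^{2-\eps}$: the bipartite sub-patterns that arise are now only $n^{-O(\eps)}$-dense rather than complete, the extra loss turns the guarantee into $n^{1-c\sqrt\eps}$, and $\eps>(\log n)^{-2/3}$ is precisely the threshold below which this recursion no longer beats the bound of part~(i).

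\emph{The recursive step.}
Given $V$ with $|V|=n$, I would aim to build a \emph{nested crossing structure}: pairwise disjoint clusters $C_1,\dots,C_N\subseteq V$ with $N\approx\sqrt n$ and $|C_i|\approx\sqrt n$, together with a family $\mathcal F$ of cluster-pairs $\{C_a,C_b\}$ such that (a)~the pairs in $\mathcal F$ \emph{strongly cross} --- for any two $\{C_a,C_b\},\{C_{a'},C_{b'}\}\in\mathcal F$, every segment with one endpoint in $C_a$ and one in $C_b$ crosses every segment with one endpoint in $C_{a'}$ and one in $C_{b'}$; and (b)~within each pair the two clusters are in convex position with respect to each other, so that the complete bipartite geometric graph on $C_a\cup C_b$ contains $|C_a|$ pairwise crossing edges (the ``anti-sorted'' perfect matching). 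The union over $\mathcal F$ of these matchings is then a pairwise crossing family of size $\approx|\mathcal F|\cdot\sqrt n$; and since a pairwise crossing family of segments is automatically a matching, choosing $\mathcal F$ of maximum size amounts to a copy of the \emph{same} problem on the $N$ cluster representatives, so one may take $|\mathcal F|\ge T(N)/2^{O(\sqrt{\log n})}$, which closes the recursion. For part~(ii) the bipartite graph between $C_a$ and $C_b$ is incomplete, so in place of a full matching one extracts the longest monotone (``anti-sorted'') chain in its $0/1$ pattern, of length $\approx|C_a|^{1-O(\eps)}$, and the deficiency propagates up the recursion.

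\emph{Constructing the nested structure} is where I expect the work to concentrate. One must satisfy many ``mutual avoidance'' and ``mutual crossing'' conditions simultaneously --- clusters convex from one another, the super-segments of distinct chosen pairs fully crossing, and the representative set still admitting a large crossing family whose ``corridors'' remain compatible --- and the natural ingredients are a centerpoint (or radial) decomposition of $V$ into angular groups; the Same-Type Lemma of B\'ar\'any and Valtr, applied hierarchically to pass to a sub-configuration in which all $O(1)$-point order types inside a bounded ``template'' are fixed, so that the crossing predicates among clusters reduce to a bounded collection of cyclic orders; and a Chazelle-type cascading/dominance argument that propagates the required crossings through $O(\sqrt{\log n})$ nested template levels at total multiplicative cost only ${O(\sqrt{\log n})\choose O(\sqrt{\log n})}=2^{O(\sqrt{\log n})}$. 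This last binomial-coefficient estimate is exactly the source of the per-step loss, and hence of the final $2^{O(\sqrt{\log n})}$.

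\emph{Main obstacles.}
The first is enforcing the strong-crossing condition~(a) for an $\mathcal F$ of near-maximal size: ``every $C_aC_b$-segment crosses every $C_{a'}C_{b'}$-segment'' is a genuinely two-dimensional requirement with no one-dimensional analogue, and it is what rules out a naive bisection recursion and forces the detour through the Same-Type Lemma. The second is keeping the size loss --- in passing from $V$ to the representatives, and in the Same-Type reduction --- down to a factor $n^{O(1/\sqrt{\log n})}=2^{O(\sqrt{\log n})}$, since a loss polynomial in $n$, or even $2^{\omega(\sqrt{\log n})}$, would break the telescoping and collapse the bound back to the classical $\Omega(\sqrt n)$. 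For part~(ii) one must additionally verify that $n^{2-\eps}$ edges already force the nested structure with $0/1$ patterns dense enough that each anti-sorted chain has length $|C_a|^{1-O(\eps)}$, and this is exactly where the hypothesis $\eps>(\log n)^{-2/3}$ is used.
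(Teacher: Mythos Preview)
Your high-level picture---a hierarchy of ``super-edges'' that pairwise cross, built by recursion---matches the paper's, but the mechanism you propose has a real gap, and the paper's actual route is different.

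The gap is the step ``choosing $\mathcal F$ of maximum size amounts to a copy of the same problem on the $N$ cluster representatives.'' Crossing of the representative segments $r_ar_b$ and $r_{a'}r_{b'}$ does \emph{not} yield your strong-crossing condition~(a); for that you need the order type of every $4$-transversal through the four clusters to be fixed. The Same-Type Lemma applied to $N\approx\sqrt n$ parts would indeed fix all these order types, but its loss is at least exponential in $N$, hence in $\sqrt n$---far beyond the $2^{O(\sqrt{\log n})}$ your recursion can tolerate. Applying it instead with $O(1)$ parts $O(\sqrt{\log n})$ times produces only $2^{O(\sqrt{\log n})}$ clusters, not $\sqrt n$, and still does not make the $\binom{N}{4}$ crossing predicates simultaneously consistent. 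Condition~(b) has the same difficulty: an anti-sorted perfect matching of size $|C_a|$ exists only if $(C_a,C_b)$ is a $0$-avoiding pair, and Valtr's construction shows that one cannot in general find such pairs of size $\omega(\sqrt n)$, so you cannot both take $|C_a|\approx\sqrt n$ and demand~(b). The ``Chazelle-type cascading'' you invoke is too vague to bridge either issue.

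The paper sidesteps both obstacles by relaxing ``avoiding'' to \emph{$\eps$-avoiding}: a separated pair $(A,B)$ with at most $\eps|A|^2$ incomparable pairs in the induced partial orders $<_B$ on $A$ and $<_A$ on $B$. Matou\v sek's zone/cutting lemma (not the Same-Type Lemma) yields a single $\eps$-avoiding, $\delta$-dense pair $(A,B)$ of nearly full size for polynomially small $\eps$. Then a poset-decomposition lemma combined with the dual of Dilworth's theorem splits $(A,B)$ into $k$ sub-pairs $(A_i,B_i)$, each again $\eps'$-avoiding and $\delta'$-dense, with the strong crossing between $E(A_i,B_i)$ and $E(A_j,B_j)$ coming for free from the order relations $A_i<_BA_j$, $B_i<_AB_j$ and an elementary convexity fact. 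The recursion is \emph{inward}---into each $(A_i,B_i)$---rather than on cluster representatives; its depth is $s=O(\sqrt{\log n})$, and the $2^{O(\sqrt{\log n})}$ loss arises from the accumulated degradation of $\eps$ and $\delta$ across levels, not from a binomial coefficient.
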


Ignoring the specific order of the error term, part (i) of the theorem implies the existence of $n^{1-o(1)}$ pairwise crossing edges in a {\it complete} geometric graph with $n$ vertices, while part (ii) implies the same in {\em all} geometric graphs with $n$ vertices and $n^{2-o(1)}$ edges. Both parts of the theorem will follow from Lemma~\ref{4+} presented in Section~\ref{3rd}, which is the heart of our argument.

\smallskip
Our proof of Theorem \ref{Thm:Main} is fully constructive. 
The crossing segments can be found by an efficient algorithm whose running time is near-quadratic in $n$ for complete or dense geometric graphs. Our construction heavily relies on the assumption that the segments are straight, but otherwise it is fairly robust. Below we briefly sketch the main ideas of our proof of Theorem \ref{Thm:Main}.

\begin{figure}[htbp]
\begin{center}
\input{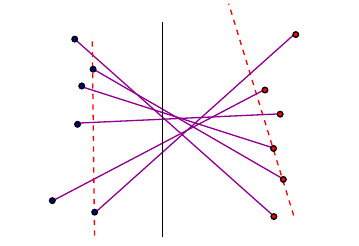_t}
\caption{\small An avoiding pair $\{A,B\}$ with $|A|=|B|=6$, and its induced family of $6$ pairwise crossing edges. The points of $B$ see the points of $A$ in the same counter-clockwise order $p_1,p_2,p_3,p_4,p_5,p_6$, and a similar property holds for the points of $A$. Equivalently, the sets $A$ and $B$ are separated by a line, and all the points of $B$ lie to the same side of each line $p_ip_j$, for $1\leq i\neq j\leq 6$, and vice versa; see Subsection \ref{Posets}.}
\label{Fig:Avoidance}
\end{center}
\end{figure}

The main observation of
Aronov et al. \cite{CrossingFamilies} was that any $n$-point set $V$ in general position in the plane contains a pair of subsets, $A$ and $B$, each of cardinality $|A|=|B|=\Omega\left(\sqrt{n}\right)$ that can be separated by a line, and so that all the points in $B$ see the points in $A$ in the same order, and vice versa. Such a pair $A,B\subset V$ is called {\it avoiding}, and its properties are reviewed in Subsection \ref{Posets}. Connecting each point of $A$ to the ``opposite'' point of $B$, we obtain a family of $m=\Omega\left(\sqrt{n}\right)$ pairwise crossing segments; see Figure \ref{Fig:Avoidance}. Valtr \cite{Va2} showed that, for certain instances of $V$, one cannot find an avoiding pair with $m$ greater than some constant times $\sqrt{n}$.

The key insight that allows us to get around this barrier is that we do not necessarily have to find two {\em avoiding} point sets in order to complete our proof. We can still proceed, albeit in a somewhat trickier manner, if we can find two $m$-element sets, $A$ and $B$, with $m$ close to $n$, which are ``nearly avoiding'' in the following sense. The points of $A$ are separated from the points of $B$ by a line, and all points of $B$ see the points in $A$ in the same ``approximate'' order and {\em vice versa}. More precisely, we will introduce two partial orders  $<_B$ on $A$ and $<_A$ on $B$ with only $o\left(m^2\right)$ incomparable pairs. To obtain such a nearly avoiding pair $(A,B)$ with $|A|=|B|=\Omega\left(n^{1-o(1)}\right)$, we use the $\epsilon$-net machinery from computational geometry  \cite{M}.

\begin{figure}[htbp]
\begin{center}
\input{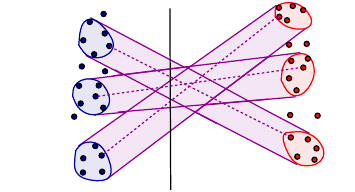_t}
\caption{\small The sets $A$ and $B$ are nearly avoiding. In the depicted scenario, most points in $A$ (resp., $B$) are partitioned into smaller sets $\{C_i\}_{i=1}^3$ (resp., $\{D_i\}_{i=1}^3$). A matching of pairwise crossing ``super-edges'' $C_1\times D_1, C_2\times D_2, C_3\times D_3$ is depicted.}
\label{Fig:SuperEdges}
\end{center}
\end{figure}

Armed with our weaker variant of avoidance, we further partition most points in $A$ (resp., $B$) into smaller subfamilies $\{C_i\}_{i=1}^k$ (resp., $\{D_i\}_{i=1}^k$) so that any two vertices of $\bigcup_{i=1}^k C_i$ that are incomparable under $(A,<_B)$ lie in the same set $C_i$, and a symmetric property holds for $\bigcup_{i=1}^k D_i$. It is easy to check that for any four distinct sets $C_{i},C_{j}, D_{i'}$ and $D_{j'}$, the property that an edge $xy\in C_i\times D_{i'}$ crosses another edge $zw\in C_{j}\times D_{j'}$ is independent of the choice of representatives $x,y,z,w$.
Finally, we use Mirsky's theorem (the dual of Dilworth's Theorem \cite{Dilworth}) to obtain a large family of pairwise crossing such ``super-edges'' $C_i\times D_{i'}$ while also maintaining the ``nearly avoiding'' property of these pairs. (See Figure \ref{Fig:SuperEdges}.)

\medskip
\noindent{\bf Paper organization.} The rest of the paper is organized as follows.
In Section~\ref{lemmas}, we collect the essential facts needed for the proof of Theorem \ref{Thm:Main}. Specifically, in Subsection \ref{Posets} we establish two lemmas concerning partially ordered sets and their application to families of pairwise crossing segments. In Subsection \ref{Subsec:Zone}, we present a partition result (implicitly shown by Matou\v{s}ek \cite{M}) which will be instrumental in finding large point sets that satisfy our approximate version of avoidance.

The proof of Theorem~\ref{Thm:Main} is given in Section~\ref{3rd}.
We first obtain a pair of nearly avoiding sets $A,B\subset V$, each of cardinality $\Omega\left(n^{1-o(1)}\right)$, and then we show that they induce a large family of pairwise crossing edges.

Section \ref{Sec:Discuss} contains some remarks, including a brief discussion of the constructive aspects of Theorem \ref{Thm:Main}. We also provide an analogue of Theorem \ref{Thm:Main} which yields many pairwise evading edges in dense geometric graphs. Here, two edges are said to be {\it evading} if the supporting line of neither of them intersects the closure of the other  \cite{CrossingFamilies,Pinchasi,Va3}. In particular, evading edges must be disjoint.

\section{Preliminaries}\label{lemmas}

\subsection{\bf Partially ordered sets}\label{Posets}
We use the term \emph{poset} for finite partially ordered sets $(P,<)$. If the ordering is clear from the context, we write $x||y$ to indicate that the elements $x,y\in P$ are {\em incomparable}. Let $\iota(P,<)$ denote the number of incomparable pairs of elements in $P$. For subsets $A$ and $B$ of $P$ we write $A<B$ if $a<b$ for all $a\in A$ and $b\in B$.

We start with the following lemma which is somewhat similar to Theorem 4 (ii) in \cite T.

\begin{lemma}\label3
Let $n$ and $k$ be positive integers, and $(P,<)$ be a poset with $|P|>nk$ and $\displaystyle \iota(P,<)\le (|P|-nk)^2/(16k)$.

\medskip
Then  one can choose suitable $n$-element subsets $A_1,A_2,\dots,A_k$ of $P$ with $A_i<A_j$ for all $i<j$.
\end{lemma}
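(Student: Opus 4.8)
The plan is to build $A_1,\dots,A_k$ greedily from the bottom. Write $m=|P|-nk$ for the ``slack''. I would maintain a shrinking sub-poset $R\subseteq P$, initially $R=P$, and at each of $k$ steps: pick an $n$-element down-set $A$ of the current $R$ (down-sets of every size $0,1,\dots,|R|$ exist, obtained by repeatedly adjoining a minimal element of the complement), set $B=\{x\in R\setminus A: x\text{ is incomparable to some element of }A\}$, and replace $R$ by $R\setminus(A\cup B)$. Because $A$ is a \emph{down-set} of $R$, every surviving element is $>$ all of $A$; hence the successive blocks satisfy $A_1<A_2<\dots$, and after $k$ steps we are done -- \emph{provided} each intermediate $R$ still had $\ge n$ elements.

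The engine is a charging argument. To each $x\in B$ at a given step assign an incomparable pair $\{x,a(x)\}$ with $a(x)\in A$; since both $x$ and $a(x)$ are deleted at that step and never reappear, these pairs are pairwise distinct over all steps, so $\sum(\text{sizes of the }B\text{'s})\le\iota(P,<)$. As each step also deletes exactly $n$ elements of $A$, the $i$-th surviving poset has size $\ge |P|-ni-\iota(P,<)$, which for $i\le k-1$ is $\ge n$ as soon as $\iota(P,<)\le m$. This already proves the lemma whenever $m\le 16k$, since then the hypothesis forces $\iota(P,<)\le m^2/(16k)\le m$.

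For the remaining range $m>16k$ I would first \emph{prune}: delete every element of incomparability degree above a threshold $d$. As the degrees sum to $2\iota(P,<)$, this costs at most $2\iota(P,<)/d$ elements, and in what remains each peeling step deletes at most $nd$ ``extra'' elements; rerunning the argument and optimizing $d\asymp\sqrt{\iota(P,<)/(nk)}$ balances the two losses. The main obstacle is that this only yields a bound of the shape $\iota(P,<)\lesssim (|P|-nk)^2/(nk)$ -- off by the factor $n$ (the block size), hence enough for bounded $n$ but not for the stated $(|P|-nk)^2/(16k)$. Closing this gap seems to require the order structure, not just the incomparability degrees.

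To remove the spurious factor of $n$ I would decompose $P$ along its ``clean cuts'': the down-sets $D$ with $D<P\setminus D$ form a chain, so they split $P$ into totally ordered atomic pieces $C_1<C_2<\dots<C_s$ with $\iota(P,<)=\sum_i\iota(C_i,<)$, and I would distribute the $k$ blocks among the $C_i$ (say $k_i$ into $C_i$), recursing on size. Inside each piece, a maximum antichain of size $a$ forces $\binom a2$ incomparable pairs, so $a\le 1+\sqrt{2\iota(C_i,<)}$, whence by Dilworth's theorem $C_i$ contains a chain of length $\ge |C_i|/(1+\sqrt{2\iota(C_i,<)})$ from which $k_i$ blocks are carved. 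The delicate endgame -- and where I expect the precise constant $16$ to be pinned down -- is the bookkeeping showing that choosing each $k_i$ maximally gives $\sum_i(|C_i|-nk_i)\le |P|-nk$, i.e.\ $\sum_i k_i\ge k$: a Cauchy--Schwarz-type estimate of $\sum_i(|C_i|-nk_i)$ against $\sum_i\iota(C_i,<)=\iota(P,<)$ and $\sum_i|C_i|=|P|$ is what makes the hypothesis $\iota(P,<)\le(|P|-nk)^2/(16k)$ appear, and I expect getting the constant exactly right to be the only genuinely fiddly point.
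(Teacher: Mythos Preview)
Your greedy down-set argument is correct where it applies, and your pruning move --- deleting elements of incomparability degree above a threshold --- is exactly the right first step. The gap is that you then use the pruned set inefficiently: bounding each step's discard by $|B|\le nd$ wastes a factor of $n$, and your attempt to recover it via clean cuts and Dilworth does not work. Concretely, take $P=\{1,\dots,N\}\times\{0,1\}$ with $(i,s)<(j,t)$ iff $i<j$; here $|P|=2N$, $\iota(P,<)=N$, the atomic pieces are the $N$ antichains $\{(i,0),(i,1)\}$, and for $n=2$ each piece contains no chain of length $2$, so your scheme produces zero blocks even though the hypothesis holds comfortably for $N=9k$ and the desired $A_i=\{(i,0),(i,1)\}$ are obvious. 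The point is that the $A_i$ need not be chains, so carving them out of Dilworth chains inside pieces is the wrong reduction.

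What actually closes the gap is a one-line order-theoretic observation you did not find. After pruning to $Q=\{x:|I_x|<T\}$ with $T=(|P|-nk)/(4k)$ (so $|P\setminus Q|\le 2\iota(P,<)/T\le(|P|-nk)/2$), take any \emph{linear extension} $x_1,\dots,x_m$ of $(Q,<)$. If $x_i\parallel x_j$ with $i<j$, then every $x_l$ with $i\le l\le j$ is incomparable to $x_i$ or to $x_j$ (since $x_i<x_l<x_j$ would force $x_i<x_j$), hence $j-i<|I_{x_i}|+|I_{x_j}|<2T$. Thus elements at distance $\ge 2T$ in the linear extension are comparable, and one simply takes the $A_i$ to be consecutive length-$n$ intervals separated by buffers of length $\lfloor 2T\rfloor$; the size check $|Q|\ge k(n+2T)$ is immediate. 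This replaces your per-step cost $nd$ by a per-buffer cost $2T$ independent of $n$, which is precisely the missing factor.
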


\begin{proof}
Let $I_x=\{y\in P\mid y||x\}$ be the set of elements incomparable to $x\in P$. Let $T=(|P|-nk)/(4k)$ and $Q=\{x\in P\mid |I_x|<T\}$. Since $\sum_{x\in P}|I_x|=2\iota(P,<)$, we have
$$|P\setminus Q|\le\frac{2\iota(P,<)}{T}\le\frac{|P|-nk}{2}.$$

Let $x_1,x_2,\dots,x_m$ be a linear ordering of the elements of $Q$ compatible with the partial order $<$. That is, $m=|Q|$, $Q=\{x_1,\dots,x_m\}$, and whenever $x_i<x_j$, we have $i<j$. If $x_i||x_j$ for some $1\le i<j\le m$, then each element $x_l$ with $i\le l\le j$ must be incomparable to either $x_i$ or $x_j$. As both $x_i$ and $x_j$ are incomparable with less than $T$ elements, this implies $j-i<2T$. Therefore $j-i\ge2T$ implies $x_i<x_j$.

We choose the sets $A_i$ to be intervals of the linear order on $Q$ with enough buffer between them to ensure the comparability. Namely, we set $A_i=\{x_j\mid (i-1)(n+\lfloor2T\rfloor)<j\le(i-1)(n+\lfloor2T\rfloor)+n\}$ for every $1 \le i \le k$.  We only need to check that $Q$ is large enough for all these intervals to fit. This holds, because
$$m= |Q|=|P|-|P\setminus Q|\ge |P| -\frac{|P|-nk}{2}\ge k(n+2T).$$
\end{proof}

Let $x$ and $y$ be distinct points in the plane. Let us orient the line $xy$ from $x$ toward $y$, and denote the open  half-plane to the left of this oriented line by $\ell(xy)$. Let $B$ be a nonempty set in the plane. For any pair of distinct points in the plane, $x$ and $y$, write $x<_By$ if $B$ is contained in $\ell(xy)$; see Figure \ref{Fig:PartialOrder} (left). The convex hull of a set $B$ is denoted by $\conv(B)$.

\begin{figure}[htbp]
\begin{center}
\input{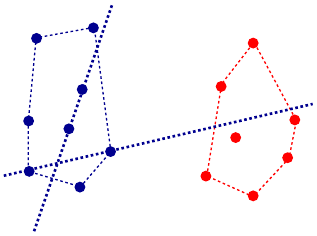_t}\hspace{3cm}\input{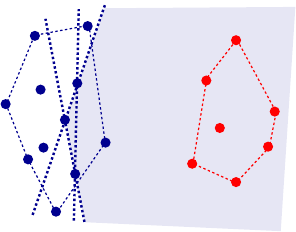_t}
\caption{\small  Left: The partial order $(A,<_B)$. We have $x<_By$ if and only if $B$ lies in the half-plane $\ell(xy)$ to the left of the directed line from $x$ to $y$. (The points $x'$ and $y'$ are incomparable, because the line $xy$ crosses the convex hull of $B$.) Right: Lemma \ref{4} -- proving that the relation $(A,<_B)$ is transitive.}
\label{Fig:PartialOrder}
\end{center}
\end{figure}


\begin{lemma}\label{4}
Let $A$ and $B$ be nonempty sets in the plane such that their convex hulls are disjoint.
Then $<_B$ defines a partial order on $A$, in which two distinct points $x$ and $y$ are incomparable if and only if the line through $x$ and $y$ intersects $\conv(B)$. Further, if $x<_By$ for $x,y\in A$ and $z<_At$ for $z,t\in B$, then the segments $xz$ and $yt$ cross.
\end{lemma}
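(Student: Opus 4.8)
I would prove the three assertions bundled into the statement — that $<_B$ is a strict partial order on $A$, the incomparability criterion, and the crossing property — in that order, putting essentially all of the work into transitivity.

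\emph{Reductions.} Since $\ell(xy)$ is convex, $x<_By$ is equivalent to $\conv(B)\subseteq\ell(xy)$, so the relation depends on $B$ only through $\conv(B)$. Irreflexivity is vacuous (we only compare distinct points of $A$), and antisymmetry is immediate: $\ell(xy)$ and $\ell(yx)$ are disjoint open half-planes, so the nonempty set $B$ cannot lie in both.

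\emph{Transitivity.} This is the step I expect to be the real obstacle, because the orientation data alone do not suffice — transitivity can fail relative to a single point of $B$ — so the disjointness hypothesis has to enter. Assume $x<_By$ and $y<_Bz$ and suppose, for contradiction, that $x\not<_Bz$; then some point $p\in\conv(B)$ lies on the line $xz$ or to its right. Writing $[u,v,w]$ for the signed area of the triangle $uvw$ (positive exactly when $u,v,w$ are in counterclockwise order), I would invoke the identity $[x,y,p]+[y,z,p]+[z,x,p]=[x,y,z]$. The hypotheses give $[x,y,p]>0$ and $[y,z,p]>0$, and the assumption on $p$ gives $[z,x,p]\ge 0$; hence $[x,y,z]>0$, so $x,y,z$ are the counterclockwise vertices of a nondegenerate triangle. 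But then the three sign conditions on $p$ are precisely the conditions for $p$ to lie in the closed triangle $xyz$, which is contained in $\conv(A)$ — contradicting $\conv(A)\cap\conv(B)=\emptyset$. Therefore $x<_Bz$.

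\emph{Incomparability and crossing.} If the line $xy$ meets $\conv(B)$, then $\conv(B)$ lies in neither $\ell(xy)$ nor $\ell(yx)$, so $x$ and $y$ are incomparable; conversely, if the line $xy$ misses $\conv(B)$, then $\conv(B)$, being connected, lies entirely in one of the two open half-planes it bounds, so $x$ and $y$ are comparable. For the crossing property, from $x<_By$ and $z<_At$ I get $z,t\in\ell(xy)$ and $x,y\in\ell(zt)$, that is $[x,y,z],[x,y,t],[z,t,x],[z,t,y]>0$; in particular no three of $x,y,z,t$ are collinear. Cyclically rotating and transposing indices in these four inequalities yields $[x,z,y]<0<[x,z,t]$ and $[y,t,z]<0<[y,t,x]$, i.e.\ $y$ and $t$ lie on opposite sides of the line $xz$ while $x$ and $z$ lie on opposite sides of the line $yt$; this is exactly the standard test showing that the segments $xz$ and $yt$ cross, and since no three of the four points are collinear the crossing point is interior to both.
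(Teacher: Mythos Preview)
Your proof is correct and follows essentially the same route as the paper: for transitivity, both arguments show that a point of $\conv(B)$ witnessing $x\not<_Bz$ would have to lie in the triangle $xyz\subseteq\conv(A)$ (the paper phrases this via the set inclusion $\ell(xz)\supseteq(\ell(xy)\cap\ell(yz))\setminus\conv(xyz)$, while you encode the same fact with the signed-area identity), and for the crossing claim both arguments amount to observing that $xyzt$ is a strictly convex quadrilateral with $xz,yt$ as diagonals. Your algebraic presentation is a bit more explicit, but the ideas are the same.
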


\begin{proof}
To check transitivity, let us assume that $x<_By$ and $y<_Bz$ for a triple of points $x,y,z\in A$; see Figure \ref{Fig:PartialOrder} (right). This means that both $\ell(xy)$ and $\ell(yz)$ contain $B$. We have $$\ell(xz)\supseteq(\ell(xy)\cap\ell(yz))\setminus\conv(xyz),$$ and $\conv(xyz)$ is disjoint from $B$.  Therefore, $B\subseteq\ell(xz)$ must hold, which means that $x<_Bz$, as required.

The distinct points $x$ and $y$ are incomparable in $<_B$ if and only if $B$ is not contained in either of the open half-planes bounded by the line $xy$, which happens if and only if the line intersects the convex hull of $B$.

Finally, for any $x,y\in A$ and $z,t\in B$ that satisfy the inequalities $x<_By$ and $z<_At$, the quadrilateral $xyzt$ must be strictly convex, so the diagonals $xz$ and $yt$ must cross.
\end{proof}


\medskip
\noindent{\bf Definition.} A {\it chain} in a poset $(P,<)$ is defined as a sequence $x_1<x_2<x_3<\ldots <x_k$, so that $x_i\in P$ for all $1\leq i\leq k$. Accordingly, an {\it antichain} in a poset $(P,<)$ is a subset $Q\subset P$ of elements no two of which are comparable.

\begin{theorem}[Mirsky \cite{Dilworth}]\label{Thm:DualDilworth}
Let $(P,<)$ be a finite poset. Then the length of the largest chain in $(P,<)$ is equal to the size $k$ of its smallest possible decomposition $P=Q_1\uplus Q_2\uplus\ldots\uplus Q_k$ into antichains $Q_i\subseteq P$, for $1\leq i\leq k$.
\end{theorem}

\noindent{\bf From partial orders to pairwise crossing segments.}
Two finite point sets $A$ and $B$ in $\reals^2$ are called {\it avoiding} \cite{CrossingFamilies} if $\conv(A)\cap \conv(B)=\emptyset$, and each of the induced partial orders $(A,<_B)$ and $(B,<_A)$ (defined in the paragraph above Lemma \ref{4}) is a {\em total} order; that is, we have $\iota(A,<_B)=0$ and $\iota(B,<_A)=0$. Equivalently, each point in $B$ sees the points of $A$ in exactly the same counter-clockwise order $(A,<_B)$, and vice versa.

Aronov {\it et al.} \cite{CrossingFamilies} established one-to-one correspondence between the elements of any avoiding pair $\{A,B\}$, for $|A|=|B|=m$, such that any two segments formed by the corresponding points cross each other (as is illustrated on the right-hand side of Figure \ref{Fig:Avoiding}).
Indeed, as  $<_B$ linearly orders $A$ and vice versa, we have that $A=\{x_1,x_2,\dots,x_t\}$ with $x_1<_Bx_2<_B\cdots<_Bx_t$, and, analogously, $B=\{y_1,y_2,\ldots,y_t\}$ with $y_1<_Ay_2<_A\cdots<_Ay_t$.  By Lemma~\ref4, the segments $x_iy_i$ are pairwise crossing, for $i=1,\ldots,t$.
Furthermore, they showed that any $n$-element point set $V\subset \reals^2$ contains a pair of avoiding subsets $A,B\subset V$ with $m=|A|=|B|=\Omega\left(\sqrt{n}\right)$. Hence, every set of $n$ points in general position in the plane determines $m=\Omega(\sqrt{n})$ pairwise crossing segments.
As was mentioned in the Introduction, this approach cannot yield any better result, because there are $n$-element point sets in which the smaller of every pair of avoiding subsets has size $O\left(\sqrt{n}\right)$.

However, avoidance is not a necessary condition for the existence of large families of pairwise crossing segments between the two sets, $A$ and $B$. In Section~\ref{3rd}, we relax this condition. We introduce the notion of ``nearly avoiding'' pairs of sets, and, using the partition result introduced in the next subsection, we show that every $n$-element point set in the plane contains two almost linear-sized subsets, $A$ and $B$, that form a nearly avoiding pair. We complete the proof of Theorem~\ref{Thm:Main} by selecting close to $|A|$ pairwise crossing edges from the set of all segments connecting $A$ and $B$.

\subsection{Line arrangements} \label{Subsec:Zone}
Any finite family $L$ of lines in $\reals^2$ induces the {\it arrangement} $\A(L)$ -- the partition of $\reals^2\setminus \left(\bigcup L\right)$ into $2$-dimensional {\it cells}, or {\it $2$-faces}.
Each of these cells is a maximal connected region of $\reals^2\setminus \left(\bigcup L\right)$; it is a (possibly unbounded) convex polygon whose boundary is composed of {\it edges} -- portions of the lines of $L$, which connect vertices -- crossings amongst the lines of $L$.

\begin{figure}[htb]
\begin{center}
\input{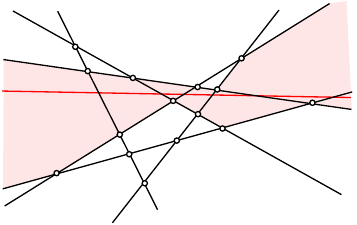_t}
\caption{\small An arrangement of $6$ lines. The cells in the zone of a (seventh) line $\ell$ are shaded.}
\label{Fig:Zone}
\end{center}
\end{figure}

The \emph{zone} of a line $\ell$ within the arrangement $\A(L)$ is the union of all the (open) cells that $\ell$ intersects; see Figure \ref{Fig:Zone}.
The following result was implicitly established by Matou\v{s}ek \cite{M}; it comprises most of the proof of the so called Test Set Lemma.


\begin{lemma}[Matou\v sek, \cite M]\label5
For any $n$-element point set $V$ in general position in the plane and for any $\eps>0$, we can find $O(1/\eps^{2})$ lines such that the zone of any other line within their arrangement contains at most $\eps n$ points of $V$.
\end{lemma}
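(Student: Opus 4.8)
The plan is to derive the lemma from Matou\v sek's simplicial partition theorem \cite M. Put $r:=\lceil(c_0/\eps)^2\rceil$ for a large absolute constant $c_0$; we may assume that $\eps$ lies between a suitable multiple of $n^{-1/2}$ and a suitable absolute constant, since for larger $\eps$ the claim is trivial and for smaller $\eps$ one simply runs the argument below with $r=n$ (and in all applications $\eps$ is far above $n^{-1/2}$). By the partition theorem, one can write $V=V_1\cup\cdots\cup V_m$ with $m=O(r)$ classes, $|V_i|\le 2n/r$ for each $i$, together with (possibly overlapping) closed triangles $\Delta_1,\dots,\Delta_m$ such that $V_i\subseteq\mathrm{int}(\Delta_i)$, and such that every line in the plane meets the interiors of at most $O(\sqrt r)$ of the triangles $\Delta_i$. (If the theorem only guarantees $V_i\subseteq\Delta_i$, enlarge each $\Delta_i$ infinitesimally; this changes neither $m$ nor the $O(\sqrt r)$ bound.) I would then take $L$ to be the set of at most $3m=O(r)=O(1/\eps^2)$ lines supporting the edges of $\Delta_1,\dots,\Delta_m$.

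The crux is an elementary containment. Fix $i$. Since $\partial\Delta_i$ is covered by lines of $L$, no cell of $\A(L)$ meets $\partial\Delta_i$, so each connected cell of $\A(L)$ lies entirely in $\Delta_i$ or is disjoint from $\Delta_i$. Now let $p\in V$ and let $j$ be the index with $p\in V_j$, so $p\in\mathrm{int}(\Delta_j)$. If $C$ is a cell of $\A(L)$ whose closure contains $p$, then $C$ cannot be disjoint from $\Delta_j$ — otherwise $\overline C\subseteq\reals^2\setminus\mathrm{int}(\Delta_j)$, contradicting $p\in\overline C\cap\mathrm{int}(\Delta_j)$ — hence $C\subseteq\Delta_j$, and in fact $C\subseteq\mathrm{int}(\Delta_j)$ because $C$ is open. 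Therefore, whenever a line $\ell$ crosses such a cell $C$, it meets $\mathrm{int}(\Delta_j)$.

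Now take any line $\ell\notin L$. Each point $p\in V$ in the zone of $\ell$ lies in the closure of some cell of $\A(L)$ that $\ell$ crosses, so by the previous step $\ell$ meets $\mathrm{int}(\Delta_{j(p)})$, where $p\in V_{j(p)}$. Thus every point of $V$ in the zone of $\ell$ belongs to a class $V_j$ with $\ell\cap\mathrm{int}(\Delta_j)\neq\emptyset$, and there are only $O(\sqrt r)$ such $j$; hence
$$|V\cap\mathrm{zone}(\ell)|\ \le\sum_{j:\ \ell\cap\mathrm{int}(\Delta_j)\neq\emptyset}|V_j|\ \le\ O(\sqrt r)\cdot\frac{2n}{r}\ =\ O(n/\sqrt r)\ =\ O(\eps n),$$
using $\sqrt r=\Theta(1/\eps)$. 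Choosing $c_0$ large enough makes the right-hand side at most $\eps n$, as required.

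I do not expect a serious obstacle here: the reduction is short, and the real content — a family of $O(1/\eps^2)$ triangles of crossing number $O(1/\eps)$ covering $V$ in their interiors — is precisely what \cite M supplies (indeed this is essentially the statement attributed to Matou\v sek above, and constitutes the bulk of the Test Set Lemma). If a self-contained proof were wanted, the main work would be reproving the partition theorem, which I would do via the standard iterated $(1/\rho)$-cutting (equivalently, recursive ham-sandwich) construction; the only other points needing care are the harmless genericity assumption $V_i\subseteq\mathrm{int}(\Delta_i)$ and the degenerate ranges of $\eps$.
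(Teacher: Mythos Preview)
Your argument is correct, and it is a genuine alternative to the paper's proof. The paper works in the \emph{dual}: it passes to the dual line set $V^*$, builds an optimal $\eps$-cutting $\Xi(V^*)$ of the dual plane into $O(1/\eps^2)$ triangles (Chazelle--Friedman), and takes $\mathcal L$ to be the primal lines dual to the vertices of $\Xi(V^*)$; the zone bound then follows because any $p\in V$ in the zone of $\ell$ has dual line $p^*$ crossing the cutting cell containing $\ell^*$. You instead stay in the \emph{primal} and invoke Matou\v sek's simplicial partition theorem directly, taking $L$ to be the supporting lines of the partition triangles. The two constructions are essentially dual to one another (the partition theorem is itself proved via dual cuttings), so the underlying content is the same, but your route avoids the explicit duality bookkeeping at the price of quoting a slightly higher-level black box. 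Your containment step---each cell of $\A(L)$ lies either in $\mathrm{int}(\Delta_j)$ or in its exterior, because $\partial\Delta_j\subseteq\bigcup L$---is clean and makes the zone bound immediate. The handling of the degenerate ranges of $\eps$ is adequate for the paper's purposes, where $\eps$ is always well above $n^{-1/2}$.
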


For his proof, Matou\v sek \cite{M} used a result of Chazelle and Friedman \cite{CF}, according to which any set of $n$ lines admits an {\it optimal $\eps$-cutting}. In other words, one can partition the plane into $O\left(\eps^{-2}\right)$ simply shaped cells (triangles or trapezoids, some of which are unbounded) with the property that every cell is crossed by at most $\eps n$ lines. He applied this result to the dual line set  $V^*$ of $V$ \cite[Section 5]{Matousek}, and argued that the duals of the vertices of this cutting (that correspond to lines in the primal plane) satisfy the requirements of Lemma \ref{5}. Before giving a formal proof of the lemma, we outline a more explicit argument which yields the same result with the slightly weaker bound $O(\log^2(1/\eps)/\eps^2)$. This bound also suffices for our calculations.

We consider all {\it angular sectors} in $\reals^2$;  each of them can be obtained as the intersection of two open half-planes. We construct a set of points $Q\subset V$ which pierces every angular sector that contains at least $\eps n/4$ points of $V$; such a set is known as an {\it $(\eps/4)$-net}.
In accordance with the standard theory of (strong) $\eps$-nets \cite{HW}, angular sectors have a {\it bounded VC-dimension} (namely, $5$), so we can find such a net $Q$ with $|Q|=O\left(\log(1/\eps)/\eps\right)$. Note that the family $\mathcal L$ of all lines determined by the point set $Q$ is of size $O(\log^2(1/\eps)/\eps^2)$. To see that this family satisfies the requirements of the lemma, it suffices to check that the zone of any line $\ell$ is contained in the union of at most {\it four} angular sectors, each disjoint from $Q$. We show that the part of the zone in a half-plane $F$ bounded by $\ell$ can be covered by two such angular sectors. For simplicity, we assume $\ell$ does not pass through any point in $Q$ and is not parallel to a line determined by two points in $Q$. These restrictions are not essential but allow us to avoid a case analysis. If $Q\cap F$ is empty or consists of collinear points, then the statement is trivial. Otherwise, let $x$ be the closest point to $\ell$ in $Q\cap F$ and let $A$ be the smallest angular sector with apex $x$, whose closure contains $Q\cap F$; see Figure \ref{Fig:Angular}. Notice that the two delimiting lines of $A$ belong to the family $\mathcal L$. It readily follows that $F\setminus \overline A$ is disjoint from $Q$, it is the union of two angular sectors, and it contains the part of the zone of $\ell$ inside $F$.

\begin{figure}[htb]
\begin{center}
\input{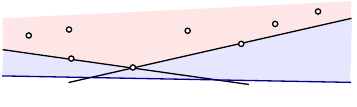_t}
\caption{\small Proof of the relaxed variant of Lemma \ref{5}. The point $x\in Q$ is the closest point to $\ell$ in the half-plane $F$ (above $\ell$).  $A$ is the smallest angular sector with apex $x$, and whose closure contains $Q\cap F$. The complement of the closure of $A$ within $F$ is covered by two angular sectors which together cover the part of the zone of $\ell$ within $F$.}
\label{Fig:Angular}
\end{center}
\end{figure}

\begin{proof}[Proof of Lemma \ref{5}]
Let $V^*$ denote the set of lines dual to the points of $V$. Since the points of $V$ are in a general position, an analogous condition must hold for the dual set $V^*$: no three lines in $V^*$ can meet at the same point. Furthermore, a generic rotation of the orthogonal axes guarantees that none of these lines can be horizontal or vertical. We construct an optimal $\eps$-cutting\, $\Xi(V^*)$ of the dual plane with respect to $V^*$  \cite{CF}.
Namely, $\Xi(V^*)$ is a decomposition of the dual plane into $O\left(1/\eps^2\right)$ interior-disjoint triangles with the property that each of these triangles is crossed by at most $\lfloor \eps |V^*|\rfloor=\lfloor \eps n\rfloor$ dual lines belonging to $V^*$.\footnote{Some of the vertices of these triangles may lie at infinity on one of the imaginary lines $x=\infty$, $x=-\infty$, $y=\infty$, $y=-\infty$, which are incorporated in $V^*$.}

Our set, which we denote by $\mathcal L$, consists of the lines dual to the vertices of $\Xi(V^*)$. To see that $\mathcal L$ satisfies the asserted property, let $\ell$ be an arbitrary non-vertical line in the primal plane.
Assume first that the dual point $\ell^*$ is contained in the relative interior of some triangle $\Delta\in \Xi(V^*)$. Since the lines in $V^*$ are in a general position, the argument readily extends to the points that lie on boundaries of the triangles of $\Delta\in \Xi(V^*)$. The crucial observation is that for any point of $V$ that lies in the zone of $\ell$ within the arrangement of $\mathcal L$, its dual line $p^*\in V^*$ must cross the boundary of the triangle $\Delta$ (or, else, $p$ will be ``separated'' from $\ell$ by the lines dual to the vertices of $\Delta$); see \cite{Quasi} for the precise details. Hence, the number of such points cannot exceed $\eps n$.
\end{proof}

\section{Proof of Theorem \ref{Thm:Main}}\label{3rd}

We begin by introducing more terminology.

\begin{definition}\label{avoiding}
Two point sets $A, B\subset \mathbb{R}^2$ are called \emph{separated} if $\conv(A)\cap\conv(B)=\emptyset$. Two separated $m$-element sets, $A$ and $B$, are said to form an \emph{$\eps$-avoiding pair} for some $\eps\ge0$ if  $$\iota(A,<_B)+\iota(B,<_A)\le\eps m^2.$$ For simplicity, a $0$-avoiding pair is called \emph{avoiding}. See Figure~\ref{Fig:Avoiding} (left).
\end{definition}

For $\eps=0$, this coincides with the notion of avoidance introduced by Aronov {\it et al.} \cite{CrossingFamilies}.

\begin{figure}[htbp]
\begin{center}
\input{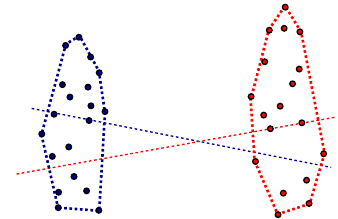_t} \hspace{3cm}
\input{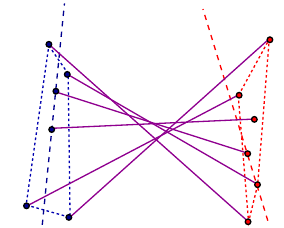_t}
\caption{\small Left: Two separated sets $A$ and $B$ form an {\it $\eps$-avoiding pair} if there exist at most $\eps m^2$ incomparable pairs in ${A\choose 2}\cup {B\choose 2}$, whose connecting lines cross the convex hull of the opposite set. Right: A $0$-avoiding pair $\{A,B\}$ with $|A|=|B|=6$, and its induced family of $6$ pairwise crossing edges.}
\label{Fig:Avoiding}
\end{center}
\end{figure}

\medskip
To be able to deal with geometric graphs that are not complete, we introduce a further definition. Let $G=(V,E)$ be a geometric graph. For any subsets $A,B\subseteq V$, let $E(A,B)\subseteq E$ denote the set of edges connecting a vertex in $A$ with a vertex in $B$.

\begin{definition}
Given a geometric graph $G=(V,E)$ and a number $\delta\ge 0$, a pair $\{A,B\}$ of disjoint $m$-element subsets of $V$ is \emph{$\delta$-dense} with respect to $G$ if $|E(A,B)|\ge\delta m^2$.
\end{definition}

If it is clear what the underlying geometric graph is, with no danger of confusion we simply say that the pair is \emph{$\delta$-dense}. Our next result states that in every sufficiently large and dense geometric graph, one can find two separated $m$-element sets that form an $\eps$-avoiding $\delta$-dense pair.

\begin{lemma}\label6
There exists an absolute constant $c>0$ with the following property. For any integer $m>0$ and any reals $\eps,\delta>0$, every geometric graph $G=(V,E)$ with $|V|\ge \frac{cm}{\eps^{4}\delta^{5}}$ and $|E|\ge\delta|V|^2$ has two separated $m$-element sets of vertices that form an $\eps$-avoiding $\delta$-dense pair with respect to $G$.
\end{lemma}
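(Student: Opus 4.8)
The plan is to combine Matou\v{s}ek's decomposition (Lemma~\ref5) with a double-counting argument: cut the plane into small, well-separated ``pieces'' and then select a pair of them that is simultaneously edge-dense and almost comparable. Set $\eps_1:=c_1\eps\delta$ for a sufficiently small absolute constant $c_1$, apply Lemma~\ref5 to $V$ to get a family $\mathcal L$ of $O(1/\eps_1^{2})$ lines, and, after an arbitrarily small perturbation of $\mathcal L$, assume that no point of $V$ lies on $\bigcup\mathcal L$. The arrangement $\A(\mathcal L)$ then has $O(1/\eps_1^{4})$ cells, each of which is an open convex set; moreover $|V\cap\mathrm{zone}(\ell)|\le\eps_1|V|$ for every line $\ell\notin\mathcal L$, and applying this to a generic line through a cell $C$ --- which puts $C$ in its zone --- yields $|V\cap C|\le\eps_1|V|$ for every cell $C$. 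Since the cells are pairwise disjoint convex sets, for any two distinct cells $C,C'$ the sets $V\cap C$ and $V\cap C'$ are separated in the sense of Definition~\ref{avoiding}.

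First I would refine this into \emph{pieces}: fix a direction generic with respect to $V$, and within each cell $C$ sort $V\cap C$ by the corresponding coordinate and split it into consecutive blocks of exactly $m$ points, discarding the fewer than $m$ leftover points of $C$. Each block equals $V\cap P$ for a convex set $P$ (an intersection of $C$ with a slab), the pieces $P$ are pairwise disjoint, and $|V\cap P|=m$ for every piece. The number of discarded points is at most $m$ times the number of cells, i.e.\ $O(m/\eps_1^{4})$, which by $|V|\ge c\,m/(\eps^{4}\delta^{5})$ and $\eps_1=c_1\eps\delta$ is at most $\delta|V|/8$; this is exactly where the exponent $\delta^{-5}$ is used. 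Also, the number of pieces lies between $|V|/(2m)$ and $|V|/m$.

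Now I would balance two counts over unordered pairs of pieces. For edges: at most $\sum_C \frac12|V\cap C|^{2}\le \frac{\eps_1}{2}|V|^{2}\le \frac{\delta}{16}|V|^{2}$ edges lie inside a single cell, at most $\frac{\delta}{8}|V|^{2}$ edges are incident to a discarded point, so at least $\frac{3\delta}{4}|V|^{2}$ edges of $G$ join two distinct pieces; since a piece-pair whose point sets span fewer than $\delta m^{2}$ edges contributes fewer than $\delta m^{2}$ of those, and there are at most $\frac12(|V|/m)^{2}$ piece-pairs, more than $\frac{\delta}{4}(|V|/m)^{2}$ of them are \emph{dense}, i.e.\ have at least $\delta m^{2}$ edges between their point sets. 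For incomparabilities: if a pair $\{x,y\}\subseteq V\cap P$ with $P\subseteq C$ is incomparable with respect to $V\cap P'$ for a piece $P'\subseteq C'$, then by Lemma~\ref{4} the line $xy$ meets $\conv(V\cap P')\subseteq C'$, so $C'$ lies in $\mathrm{zone}(xy)$ and $\sum_{C''\in\mathrm{zone}(xy)}|V\cap C''|\le\eps_1|V|$; since each cell $C''$ contains at most $|V\cap C''|/m$ pieces, the number of pieces $P'$ for which $xy$ meets $\conv(V\cap P')$ is at most $\eps_1|V|/m$. Summing over the $O(m|V|)$ pairs $\{x,y\}$ that lie inside a piece bounds $\sum_{P\ne P'}\iota(V\cap P,<_{V\cap P'})$ by $O(\eps_1|V|^{2})$, hence the number of piece-pairs $\{P,P'\}$ with $\iota(V\cap P,<_{V\cap P'})+\iota(V\cap P',<_{V\cap P})>\frac{\eps}{4}m^{2}$ is $O\big((\eps_1/\eps)(|V|/m)^{2}\big)$, which --- as $\eps_1/\eps=c_1\delta$ --- is below $\frac{\delta}{8}(|V|/m)^{2}$ once $c_1$ is small enough. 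Therefore some piece-pair $\{P,P'\}$ is dense but not of this exceptional type, and I would output $A:=V\cap P$ and $B:=V\cap P'$: then $|A|=|B|=m$, the pair is separated, $|E(A,B)|\ge\delta m^{2}$, and $\iota(A,<_B)+\iota(B,<_A)\le\frac{\eps}{4}m^{2}\le\eps m^{2}$, as required.

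I expect the incomparability estimate to be the main obstacle. The naive observation that a zone meets only $O(1/\eps_1^{2})$ cells is far too weak here; the point of shrinking the cells all the way down to exactly $m$ points is precisely to make the number of relevant pairs $\{x,y\}$ only $O(m|V|)$ and the per-pair contribution only $O(\eps_1|V|/m)$, so that the \emph{quantitative} point-count in Lemma~\ref5, rather than the mere number of cells in a zone, controls the total incomparability. The rest is bookkeeping: fixing the absolute constants and verifying that the fraction of ``incomparable-heavy'' piece-pairs stays below that of the dense ones (and noting the trivial cases $\eps\ge\frac12$, where the avoiding condition is vacuous). No tool beyond Lemmas~\ref5 and~\ref{4} seems necessary here; Lemma~\ref3 is not used for this statement.
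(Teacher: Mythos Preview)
Your proposal is correct and follows essentially the same approach as the paper: apply Lemma~\ref5 with parameter $\Theta(\eps\delta)$, refine each cell of the resulting arrangement into $m$-point clusters by parallel cuts, then compare the number of $\delta$-dense cluster-pairs (lower-bounded via the global edge count) against the number of non-$\eps$-avoiding cluster-pairs (upper-bounded by double-counting incomparable pairs $\{x,y\}$ using the zone bound of Lemma~\ref5). The only cosmetic differences are that the paper discards merely the edges inside a single cluster rather than inside a full cell, and works directly with the threshold $\eps m^2$ instead of your $\tfrac{\eps}{4}m^2$; your observation that Lemma~\ref3 is not needed here is also correct.
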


\begin{proof}
Let us set $n=|V|$. We use Lemma~\ref5 to obtain an arrangement $\A(L)$ of a set $L$ of $r=O\left(1/(\eps\delta)^2\right)$ lines such that the zone of any line contains at most $\eps\delta n/2$ points of $V$. We assume that $r\ge3$.

Using parallel segments or half-lines that do not pass through any point of $V$, split each cell of $\A(L)$ into smaller cells such that all but at most one of them contain precisely $m$ elements of $V$, and if there is an exceptional cell, it contains fewer than $m$ points. Denote the resulting cell decomposition of $\reals^2$ by $\Pi$. Obviously, every cell in $\Pi$ is convex. The set of $m$ points of $V$ inside a non-exceptional cell of $\Pi$ is called a \emph{cluster}; see Figure \ref{Fig:Clusters}. Denoting the number of clusters by $D$, we have $D\le n/m$.

\begin{figure}[htb]
\begin{center}
\input{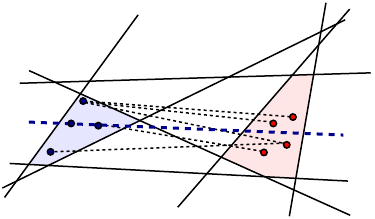_t}
\caption{\small A pair of cells in the decomposition $\Pi$, which determine clusters $A$ and $B$. The edges of $E(A,B)$ are depicted. If the pair $x,y\in A$ is incomparable with respect to $<_B$, then $B$ must lie in the zone of the line $xy$.}
\label{Fig:Clusters}
\end{center}
\end{figure}

Let $H$ denote the set of points in $V$ that do not belong to any cluster. Such a point must lie either in an exceptional cell of $\Pi$ or on a line of $L$. There are at most $r^2-1$ exceptional cells, each containing fewer than $m$ points in $V$, and $\Pi$ has $r$ lines, each passing through at most $2$ points in $V$. Thus, we have $|H|\le (r^2-1)m$. Every edge of $G$ belongs to one of the following categories:

\smallskip
(i) it has a vertex belonging to $H$, or

(ii) it connects a pair of vertices in the same cluster, or

(iii) it connects a pair of vertices in distinct clusters that do {\em not} form a $\delta$-dense pair, or

(iv) it connects a pair of vertices in distinct clusters that form a $\delta$-dense pair.

\smallskip
The number of edges in category (i) is at most $|H|n\le(r^2-1)mn$. The number of edges in category (ii) is at most $D{m\choose2}<mn$. The number of edges in category (iii) is less than ${D\choose2}\delta m^2<\delta n^2/2$. By assumption, $G$ has at least $\delta n^2$ edges, so more than $\delta n^2/2-r^2mn$ of them must belong to category (iv). The number of edges between any two distinct clusters is at most $m^2$. Therefore, the number of unordered $\delta$-dense pairs of clusters is at least
\begin{equation}\label{eqn1}
\frac{\delta n^2/2-r^2mn}{m^2}=\frac{\delta n^2}{2m^2}-\frac{r^2n}{m}.
\end{equation}

Let $$X=\sum_{\{A, B\}}(\iota(A,<_B)+\iota(B,<_A)),$$
where the sum is taken over all unordered pairs of distinct clusters $\{A, B\}$.

We can estimate $X$ according to the point pairs incomparable with respect to a cluster. An unordered pair of distinct vertices $\{x,y\}$ will contribute to the sum $X$ only if $x$ and $y$ come from the same cluster, and in this case its contribution will be the number of other clusters whose convex hulls are crossed by the line $xy$. All of these clusters belong to the zone of the line $xy$ in the arrangement $\A(L)$. By the choice of $L$, the zone of any line contains at most $\eps\delta n/2$ vertices. Thus, the contribution of any pair of points is at most $\eps\delta n/(2m)$, so we have $$X\le D{m\choose2}\frac{\eps\delta n}{2m}<\eps\delta n^2/4.$$

On the other hand, each pair of clusters which is \emph{not} $\eps$-avoiding contributes more than $\eps m^2$ to $X$, so we have fewer than $X/(\eps m^2)$ such pairs. This implies that the number of not $\eps$-avoiding pairs of clusters is less than
\begin{equation}\label{eqn2}
\frac{\eps\delta n^2/4}{\eps m^2}=\frac{\delta n^2}{4m^2}.
\end{equation}

Clearly, each cluster has $m$ elements and any two of them are separated. We are done if we find an $\eps$-avoiding $\delta$-dense pair formed by two clusters. For this, it is sufficient to show that there are more $\delta$-dense pairs of clusters than pairs that are not $\eps$-avoiding. Comparing (\ref{eqn1}) and (\ref{eqn2}), this is true as long as
$$\frac{\delta n^2}{2m^2}-\frac{r^2n}{m}\ge \frac{\delta n^2}{4m^2},$$
that is, if $n\ge4mr^2/\delta$. This can be ensured by choosing the constant $c$ large enough as a function of the constant hidden in $r=O(1/(\eps\delta)^{2})$, which comes from Lemma~\ref5.
\end{proof}

The heart of the proof is the following partition result.

\begin{lemma}\label{4+}
Let $k$, $m$, and $t\ge3$ be positive integers, and set $\delta=1/t$, $\eps=1/(32t^2k)$. Let $A$ and $B$ be two separated sets of vertices in a geometric graph $G$ with $|A|=|B|=(t+1)km$. Suppose that $\{A, B\}$ is a $8\delta$-dense, $\eps\delta$-avoiding pair.

Then we can find pairwise disjoint $m$-element subsets $A_1, A_2,\ldots, A_k$ $\subset A$, $B_1, B_2, \ldots, B_k$ $\subset B$ such that for every $1\le i\le k$, $\{A_i, B_i\}$ is a $\delta$-dense, $\eps$-avoiding pair and, for $1\le i<j\le k$, all edges in $E(A_i,B_i)$ cross every edge in $E(A_j,B_j)$.
\end{lemma}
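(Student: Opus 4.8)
The plan is to build the sets $A_i$ and $B_i$ in two stages: first pass to large sub-posets of $(A,<_B)$ and $(B,<_A)$ that can be split into $k$ comparable blocks using Lemma~\ref3, and then, inside each block, locate an $m$-element pair that is simultaneously $\delta$-dense and $\eps$-avoiding by a counting/averaging argument. The comparability between blocks is exactly what delivers the crossing property via Lemma~\ref4.

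First I would apply Lemma~\ref3 to the poset $(A,<_B)$. We have $|A|=(t+1)km$ and $\iota(A,<_B)\le\iota(A,<_B)+\iota(B,<_A)\le\eps\delta|A|^2 = \frac{1}{32t^3k}\,(t+1)^2k^2m^2$. I want to invoke Lemma~\ref3 with ground size $|A|$, block count $k$, and block size $n'$ chosen so that $|A|-n'k$ is a constant fraction of $|A|$ — say $n'=tkm/(\text{something})$, e.g. taking $n'= tkm$ would give $|A|-n'k = km$, a $\Theta(1/(t+1))$ fraction, and then one checks $\iota(A,<_B)\le (|A|-n'k)^2/(16 n') = (km)^2/(16 tkm) = km/(16t)$; the actual $\iota$ bound $\frac{(t+1)^2 k m^2}{32 t^3}$ is smaller once $m$ is at least a small constant, so the hypothesis of Lemma~\ref3 is met (I would tune the exact split to make this clean). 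This produces $n'$-element sets $A'_1<_B A'_2<_B\cdots<_B A'_k$ inside $A$. Doing the same for $(B,<_A)$ gives $B'_1<_A B'_2<_A\cdots<_A B'_k$, each of size $n'$. Since $x<_By$ and $z<_At$ force $xz$ and $yt$ to cross (Lemma~\ref4), for $i<j$ every segment with one endpoint in $A'_i$ and one in $B'_i$ crosses every segment with one endpoint in $A'_j$ and one in $B'_j$ — this already handles the crossing requirement, and it will survive shrinking $A'_i,B'_i$ to $A_i\subseteq A'_i$, $B_i\subseteq B'_i$.

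Next, inside each index $i$ I must find $m$-element $A_i\subseteq A'_i$ and $B_i\subseteq B'_i$ forming a $\delta$-dense, $\eps$-avoiding pair. For density: the edge count $|E(A,B)|\ge 8\delta|A|^2$ is spread over the $k^2$ rectangles $A'_a\times B'_b$; by the pigeonhole/averaging and the fact that $|A'_a|,|B'_b|\le|A|/k$ roughly, the diagonal rectangles $A'_i\times B'_i$ collectively still carry a positive fraction, but I actually only need one good sub-pair per $i$, so I would instead randomly (or greedily) select $m$-subsets $A_i\subseteq A'_i$, $B_i\subseteq B'_i$; the expected number of edges between them is $\ge |E(A'_i,B'_i)|\cdot (m/n')^2$, and the expected incomparable count is $\le (\iota(A'_i,<_B)+\iota(B'_i,<_A))\cdot(m/n')^2$. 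The subtle point is that I need a \emph{single} pair of subsets that is good for both density and avoidance simultaneously. I would handle this with a two-step argument: first note $\sum_i |E(A'_i,B'_i)|$ is large enough that an $\Omega(1)$-fraction of the indices $i$ have $|E(A'_i,B'_i)|\ge 2\delta n'^2$ (a "dense index"); then, for each such $i$, use Markov on the incomparable-pair count to discard the few subsets that are not $\eps$-avoiding and conclude a random $m\times m$ subpair is both $2\delta\cdot\frac14$-dense — wait, here the bookkeeping must be done carefully — and $\eps$-avoiding with positive probability. The cleanest route: pick the $m$-subsets proportionally and use that $\mathbb E[\#\text{edges}] - \lambda\cdot\mathbb E[\#\text{incomparable pairs}]$ is positive for a suitable $\lambda$, so some outcome beats both thresholds at once.

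The main obstacle I anticipate is precisely this simultaneous control of the density and avoidance parameters after the double restriction (first to blocks, then to $m$-subsets), while juggling the loss factors $(m/n')^2$ against the gap between $8\delta$ and $\delta$ and between $\eps\delta$ and $\eps$. The constant $8$ in "$8\delta$-dense" and the factor $32$ in $\eps=1/(32t^2k)$ are clearly chosen to absorb exactly these losses (roughly: a factor $2$ lost to "most blocks are good", a factor $2$ to the averaging within a block, a factor $2$ to Markov), so the bulk of the work is verifying that these slack factors suffice, and making sure the block size $n'$ extracted from Lemma~\ref3 is large enough — comfortably $\ge$ a constant multiple of $tkm$ — for all the $(m/n')^2$-type estimates to go through. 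The crossing property, by contrast, is essentially free once Lemma~\ref3 and Lemma~\ref4 are in hand.
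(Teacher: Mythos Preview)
Your approach has a genuine gap at the density step. You split $A$ and $B$ into only $k$ blocks each and then commit to the \emph{diagonal} pairing $(A'_i,B'_i)$. But nothing forces the diagonal rectangles to contain any edges at all: the $8\delta|A|^2$ edges of $E(A,B)$ could sit entirely in off-diagonal rectangles $A'_a\times B'_b$ with $a\ne b$ (for instance, concentrated along an anti-diagonal pattern). Your claim that ``$\sum_i|E(A'_i,B'_i)|$ is large enough'' is precisely what is unproven, and in general false. Once some $E(A'_i,B'_i)$ is empty, no random or greedy selection of $m$-subsets inside $A'_i$ and $B'_i$ can produce a $\delta$-dense pair, and the whole scheme collapses for that index $i$. (There is also a minor slip in your check of Lemma~\ref3: the denominator is $16k$, not $16n'$; the hypothesis does nonetheless hold.)

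The paper's proof sidesteps this by over-partitioning: it applies Lemma~\ref3 with block size $m$ and block count $tk$ (not $k$), so the blocks $C_a\subset A$ and $D_b\subset B$ already have the target size $m$ and no further subsampling is needed. One then counts, among all $(tk)^2$ pairs $(C_a,D_b)$, how many are $\delta$-dense (more than $4tk^2$, by edge counting) and how many fail to be $\eps$-avoiding (fewer than $2tk^2$, via a geometric claim that an incomparable pair in $(A,<_B)$ remains incomparable in at most one ordering $<_{D_b}$). This leaves more than $2tk^2$ ``eligible'' pairs. Since any antichain in the product order on indices $(a,b)$ has fewer than $2tk$ elements, the dual of Dilworth's theorem yields a monotone chain of length $k$, and that chain furnishes the desired $(A_i,B_i)$ with the crossing property coming from Lemma~\ref4. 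The over-partitioning into $tk$ blocks and the Dilworth extraction of a monotone chain are exactly the ideas your proposal is missing.
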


\begin{proof}
It follows from the assumption that $\{A, B\}$ is an $\eps\delta$-avoiding pair that the posets $(A, <_B)$ and $(B, <_A)$ both satisfy the conditions of Lemma~\ref3 with $n=m$ and with $tk$ in place of $k$. Indeed, by Definition~\ref{avoiding}, we obtain
$$\iota(A,<_B)+\iota(B,<_A)\le\eps\delta (t+1)^2k^2m^2=\frac{(1+1/t)^2km^2}{32t}<
\frac{(|A|-mtk)^2}{16tk}.$$
Hence, we can apply Lemma~\ref3 to find suitable subsets $C_i\subseteq A$ and $D_i\subseteq B$ for $1\le i\le tk$ such that $|C_i|=|D_i|=m$ for all $i$, and $C_i<_BC_j$, $D_i<_AD_j$ for all $i<j$; see Figure \ref{Fig:Decomposition} (left).

Lemma~\ref4 implies that all edges in $E(C_i,D_i)$ cross every edge in $E(C_j,D_j)$ as long as $i\ne j$, which suggests to select the pairs $(A_i,B_i)$ from among the the pairs $(C_i,D_i)$. Unfortunately, many of these pairs $(C_i,D_i)$ might \emph{not} be $\delta$-dense or $\eps$-avoiding. We must be careful when we select a suitable pair for a set $C_a$. Any collection of $\delta$-dense, $\eps$-avoiding pairs $\{(C_a,D_b)\}$ will work here (still by Lemma~\ref4), as long as $b=b(a)$ is a monotone increasing function of $a$.

\begin{figure}[htbp]
\begin{center}
\input{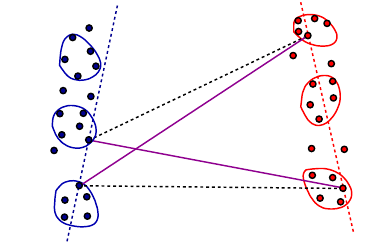_t}
\hspace{2.5cm}
\input{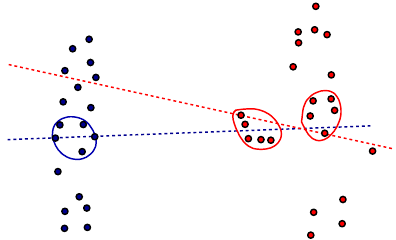_t}
\caption{\small Left: The subsets $\{C_i\}$ and $\{D_i\}$. We have $C_i<_BC_j$ and $D_{i'}<_AD_{j'}$, for every $i<j$ and $i'<j'$. Notice that in this case,  by Lemma~\ref4, for any choice of $x\in C_i$, $z\in C_j$, $y\in D_{i'}$, and $w\in D_{j'}$, the segments $xy$ and $zw$ cross. Right: Proof of Claim~\ref{c1}. If $x$ and $y$ are incomparable with respect to both sets $D_a$ and $D_b$, and $a<b$, then $y$ lies to the right of the line from $z$ to $w$. This contradicts the assumption $D_a<_AD_b$.}
\label{Fig:Decomposition}
\end{center}
\end{figure}

Since $\{A,B\}$ is an $8\delta$-dense pair and $|A|=|B|=(t+1)km$, by definition we have $$|E(A,B)|\ge8\delta((t+1)km)^2.$$

\medskip
Every edge in $E(A,B)$

\smallskip
\noindent (i) belongs to $E(C_a,D_b)$ for some $\delta$-dense pair $(C_a,D_b)$, or

\noindent (ii) belongs to $E(C_a,D_b)$ for a pair $(C_a,D_b)$ that is not $\delta$-dense, or

\noindent (iii) has at least one endpoint outside $X=\bigcup_{a=1}^{tk}(C_a\cup D_a)$.

\smallskip
Note that each of the sets $A$ and $B$ has exactly $km$ elements outside $X$, so the number of edges that belong to the last category is less than $2km|A|=2(t+1)k^2m^2$. If $(C_a,D_b)$ is not $\delta$-dense, then $|E(C_a,D_b)|<\delta m^2$. Hence, the total number of edges of category (ii) is smaller than $(tk)^2\delta m^2$. Thus, the number of edges of category (i) is larger than
$$|E(A,B)|-2(t+1)k^2m^2-(tk)^2\delta m^2>4tk^2m^2.$$
Each $\delta$-dense pair $(C_a,D_b)$ contributes $|E(C_a,D_b)|\le m^2$ edges to this category. Therefore, the number of $\delta$-dense pairs $(C_a,D_b)$ is larger than $4tk^2$.

Consider now a pair of points $x,y\in A$. Clearly, if $x<_By$, then $x<_{D_b}y$  for all $1\le b\le tk$.

\begin{claim}\label{c1}
If points $x$ and $y$ in $A$ are incomparable in $<_B$, they are still comparable with respect to all but at most one ordering $<_{D_b}$, $1\le b\le tk$.
\end{claim}

\begin{proof}[Proof of Claim~\ref{c1}]
To verify this claim, suppose for a contradiction that $x$ and $y$ are incomparable in both $<_{D_a}$ and $<_{D_b}$ for some $1\le a<b\le tk$; see Figure \ref{Fig:Decomposition} (right). Hence, the line $xy$ must cross both $\conv(D_a)$ and $\conv(D_b)$.

On the other hand, since any two points $z\in D_a$ and $w\in D_b$ satisfy $z<_A w$, for every triple $x'\in A,z\in D_a$, and $w\in D_b,$ we have that $x'\in \ell(zw)$ and $w\in \ell(x'z)$.
It readily follows that $D_b$ is separated from $A\cup D_a$ by a common tangent to $\conv(A)$ and $\conv(D_a)$. In particular, $\conv(D_a)$ and $\conv(D_b)$ must be disjoint.

Assume with no loss of generality that the directed line $xy$ meets $\conv(D_a)$ and $\conv(D_b)$ in this order. Fix $z\in D_a\cap \ell(xy)$ and $w\in D_b\cap \ell(yx)$. Then the line $zw$ meets $xy$ between $\conv(D_a)$ and $\conv(D_b)$. Hence, both $x$ and $y$ must lie in the half-plane $\ell(wz)$, which contradicts the assumption that $A\in \ell(zw)$.
(If the line $xy$ meets $\conv(D_b)$ before $\conv(D_b)$, we apply a symmetric argument with $z\in D_a\cap \ell(yx)$ and $w\in D_b\cap \ell(xy)$.)
\end{proof}

It follows from Claim~\ref{c1} that
$$\sum_{1\le a,b\le tk}\iota(C_a,<_{D_b})\le\iota(A,<_B).$$
By symmetry, we also have
$$\sum_{1\le a,b\le tk}\iota(D_b,<_{C_a})\le\iota(B,<_A).$$
Thus, we obtain that
$$\sum_{1\le a,b\le tk}(\iota(C_a,<_{D_b})+\iota(D_b,<_{C_a}))\le\iota(A,<_B)+\iota(B,<_A)\le\eps\delta((t+1)km)^2,$$
where the last inequality follows from the assumption that $\{A,B\}$ is an $\eps\delta$-avoiding pair.
By definition, any pair $(C_a,D_b)$ that is {\em not} $\eps$-avoiding contributes more than $\eps m^2$ to this sum, so the number of such pairs is smaller than $\delta(t+1)^2k^2<2tk^2$.

All pairs $\{C_a,D_b\}$ are separated and consist of $m$-element sets. Call such a pair \emph{eligible} if it is both $\delta$-dense and $\eps$-avoiding. As we saw above, the number of $\delta$-dense pairs $(C_a,D_b)$ is larger than $4tk^2$. We have just seen that fewer than $2tk^2$ of them are not $\eps$-avoiding. Thus, the number of eligible pairs is larger than $2tk^2$.

Define a partial order on the set of eligible pairs as follows. Let $\{C_a,D_b\}<\{C_{a'},D_{b'}\}$ if $a<a'$ and $b<b'$. If there was no monotone chain of length $k$ with respect to this partial order, then by the dual of Dilworth's theorem \cite{Dilworth} all eligible pairs could be covered by fewer than $k$ antichains (i.e., fewer than $k$ sets of pairwise incomparable eligible pairs). The value $a-b$ is distinct for each pair $\{C_a,D_b\}$ in an antichain. Here we have $-kt<a-b<kt$, so antichains have fewer than $2kt$ eligible pairs. Hence, in this case the total number of eligible pairs would be smaller than $2tk^2$, contradicting our above estimate.

Thus, there exists a monotone chain of $k$ eligible pairs, say $\{A_1,B_1\}<\{A_2,B_2\}<\cdots<\{A_k,B_k\}$. These pairs obviously satisfy the requirements of Lemma \ref{4+}.
\end{proof}

By repeated application of Lemma~\ref{4+}, we obtain the following result.

\begin{lemma}\label7
Let $s$ and $u$ be positive integers such that $u$ is a multiple of $8^s$, and set $\delta=8^s/u$, $\eps=1/(32u^{s+2})$, and $K=(512u)^{s\choose2}$.

There exists a positive integer $M=M_s(u)\le u^sK$ such that for any two $M$-element sets of vertices, $A$ and $B$ that form a $\delta$-dense $\eps$-avoiding pair in a geometric graph $G$, the set of edges $E(A,B)$ connecting them contains $K$ pairwise crossing elements.
\end{lemma}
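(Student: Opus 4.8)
I would prove Lemma~\ref{7} by induction on $s$, with Lemma~\ref{4+} serving as the sole inductive step. Throughout, write $\delta_j=8^j/u$, $\eps_j=1/(32u^{j+2})$, and $K_j=(512u)^{j\choose 2}$ for the quantities attached to level $j$. The base case $s=1$ is immediate: there $K=K_1=1$, so one takes $M_1(u):=1\le u$, and any two separated $1$-element sets forming a $\delta_1$-dense pair already contain an edge of $G$ (because $|E(A,B)|\ge\delta_1>0$), which by itself forms a (trivially) pairwise crossing family of $K_1=1$ edge; the $\eps_1$-avoiding requirement is automatically satisfied by singletons. For $s\ge2$ I define $M_s(u)$ recursively: given $M_{s-1}(u)$, set $t:=u/8^{s-1}$ (a positive integer with $t\ge8\ge3$, since $8^s\mid u$), $k:=(512u)^{s-1}$, $m:=M_{s-1}(u)$, and $M_s(u):=(t+1)km$.

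The first step of the inductive argument is to check that, for these choices, a $\delta_s$-dense, $\eps_s$-avoiding pair $\{A,B\}$ with $|A|=|B|=M_s(u)$ meets the hypotheses of Lemma~\ref{4+} \emph{verbatim}, with the $k,m,t$ of that lemma equal to ours. Taking $\delta:=1/t$ and $\eps:=1/(32t^2k)$ as there, the identities $8\cdot8^{s-1}=8^s$ and $8^3=512$ give $8\delta=8^s/u=\delta_s$ and $\eps\delta=1/(32t^3k)=1/(32u^{s+2})=\eps_s$, while $|A|=|B|=(t+1)km$ by construction. Applying Lemma~\ref{4+} produces pairwise disjoint $m$-element sets $A_1,\dots,A_k\subseteq A$ and $B_1,\dots,B_k\subseteq B$ such that each $\{A_i,B_i\}$ is a $(1/t)$-dense, $(1/(32t^2k))$-avoiding pair, and (by Lemma~\ref{4}, the source of crossings inside Lemma~\ref{4+}) every edge of $E(A_i,B_i)$ crosses every edge of $E(A_j,B_j)$ for $i\ne j$.

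Next I would feed these subpairs into the induction hypothesis at level $s-1$, which is legitimate because $u$, being a multiple of $8^s$, is also a multiple of $8^{s-1}$. Since $1/t=8^{s-1}/u=\delta_{s-1}$ and $32t^2k=32\cdot8^{s-1}u^{s+1}\ge32u^{s+1}$, each $\{A_i,B_i\}$ is $\delta_{s-1}$-dense and $\eps_{s-1}$-avoiding and has exactly $m=M_{s-1}(u)$ elements; so by induction $E(A_i,B_i)$ contains a family $F_i$ of $K_{s-1}=(512u)^{s-1\choose 2}$ pairwise crossing edges. Then $F:=\bigcup_{i=1}^kF_i$ is contained in $E(A,B)$, has $k\,K_{s-1}$ edges, and any two of them cross --- within a single $F_i$ by the choice of $F_i$, and between distinct $F_i,F_j$ by the conclusion of Lemma~\ref{4+}. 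Using $(s-1)+{s-1\choose 2}={s\choose 2}$, this gives $|F|=(512u)^{s-1}(512u)^{s-1\choose 2}=(512u)^{s\choose 2}=K$, as required.

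Finally I would verify the size bound $M_s(u)\le u^sK$. From $t+1\le2t$ and the inductive estimate $M_{s-1}(u)\le u^{s-1}(512u)^{s-1\choose 2}$ one obtains $M_s(u)\le 2tk\cdot u^{s-1}(512u)^{s-1\choose 2}$; since $2tk=2\cdot(512/8)^{s-1}u^s=2\cdot64^{s-1}u^s$ and $u^sK=512^{s-1}u^{2s-1}(512u)^{s-1\choose 2}$, this reduces to $2\cdot64^{s-1}\le512^{s-1}$, i.e.\ $2\le8^{s-1}$, which holds for every $s\ge2$; and for $s=1$ the bound is just $1\le u$. I expect the only genuine difficulty to be this bookkeeping: making sure the gap between the ``$8\delta$-dense'' input and the ``$\delta$-dense'' output of Lemma~\ref{4+} (and, likewise, between ``$\eps\delta$-avoiding'' and ``$\eps$-avoiding'') is exactly what the level-$(s-1)$ hypothesis consumes, and taking the base case to be $s=1$ with $M_1(u)=1$ rather than routing the recursion through a nonexistent level $s=0$, where the factor $t+1$ would equal $u+1$ instead of about $t$ and would break the $u^s$ bound.
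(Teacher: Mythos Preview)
Your proof is correct and follows the paper's argument essentially verbatim: the same induction on $s$ with base case $M_1(u)=1$, the same choice $t=u/8^{s-1}$, $k=(512u)^{s-1}$, $m=M_{s-1}(u)$ in Lemma~\ref{4+}, and the same verification that $1/(32t^2k)\le\eps_{s-1}$ lets the inductive hypothesis apply. The only cosmetic difference is in the size bound, where the paper uses the cruder estimate $t+1\le u$ (giving $M_s(u)\le u\cdot k\cdot u^{s-1}K_{s-1}=u^sK$ directly) rather than your $t+1\le2t$ route; both arrive at the same inequality.
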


\begin{proof}
We prove the lemma by induction on $s$. The statement is trivial for $s=1$, as in this case we have $K=1$, so we can choose $M=1$ and select any one edge from $E(A,B)$ (which is nonempty, because $\delta>0$).

Let $s>1$ and assume that the statement holds for $s-1$ in place of $s$. We apply Lemma~\ref{4+} with $t=u/8^{s-1}$, $k=(512u)^{s-1}$, and $m=M_{s-1}(u)$. Setting $M=M_s(u)=(t+1)km$, the lemma states that given a $\delta$-dense $\eps$-avoiding pair $\{A,B\}$ with $|A|=|B|=M$, we can find pairwise disjoint $m$-element subsets $A_1,\dots,A_k\subset A$ and $B_1,\dots,B_k\subset B$  such that $(A_i,B_i)$ is a $1/t$-dense $1/(32t^2k)$-avoiding pair for every $i$, and every edge in $E(A_i,B_i)$ crosses all edges in $E(A_j,B_j)$, provided that $i\ne j$. For each $i$, we use the inductive hypothesis with the same $u$ and with $s-1$ in place of $s$ to find a family $Z_i$ of $K_0=(512u)^{s-1\choose2}$ pairwise crossing edges in $E(A_i,B_i)$. Note that $(A_i,B_i)$ is $1/(32t^2k)$-avoiding. To apply the induction hypothesis, we need that $(A_i,B_i)$ is $1/(32u^{s+1})$-avoiding, but  this holds, as $$1/(32t^2k)=1/(32\cdot8^{s-1}u^{s+1})\le1/(32u^{s+1}).$$ The union of the sets $Z_i$ contains $kK_0=K$ edges of $E(A,B)$, any two of which cross. Since $m=M_{s-1}(u)\le u^{s-1}K_0$, we obtain that
$$M=M_s(u)=(t+1)km\le uku^{s-1}K_0=u^sK.$$
\end{proof}

Part (ii) of Theorem~\ref{Thm:Main} follows by combining Lemmas~\ref6 and \ref7.

\begin{proof}[Proof of Theorem~\ref{Thm:Main}(ii)]
Let $G$ have $n$ vertices and at least $n^{2-x}$ edges with $x>(\log n)^{-2/3}$. We set the value of $s$ later and set $u=8^s\lceil n^x\rceil$. Apply Lemma~\ref6 with $\delta=8^s/u$, $\eps=1/(32u^{s+2})$ and $m=M_s(u)$ from Lemma~\ref7. The density condition of Lemma~\ref6 is satisfied, so if $n$ is large enough, we obtain separated vertex sets $A$ and $B$ of size $m$ forming a $\delta$-dense, $\eps$-avoiding pair. If this happens, we apply Lemma~\ref7 with the parameters $s$ and $u$ to obtain $K=(512u)^{s\choose2}$ pairwise crossing edges in $E(A,B)$.

It remains to do the calculation to check what value we can choose for $s$ and how many pairwise crossing edges we find this way. We can apply Lemma~\ref6 if $n\ge cm\delta^{-5}\eps^{-4}$ for the absolute constant $c$ in that lemma. Using $m=M_s(u)\le u^sK$ and the formulas defining $\delta$ and $\eps$, we see that $n\ge32^4cu^{5s+13}K$ suffices. In case $n=O(u^{5s+13}K)$, we have $n/K=O(u^{5s+13})$. Here $n\ge K\ge u^{s\choose2}\ge n^{x{s\choose2}}$, so $x{s\choose2}\le1$ and $s=O(1/\sqrt x)$. We also have $u=2^{O(s)}n^x=n^{x+O(s/\log n)}=n^{O(x)}$, since $s/\log n=O(x^{-1/2}/\log n)=O(x)$ by our lower bound on $x$. We have $n/K=u^{O(s)}=n^{O(sx)}=n^{O(\sqrt x)},$ as claimed.

In the above line of reasoning we assumed that the size of $G$ is just suitable, namely $n\ge32^4cu^{5s+13}K$, but barely. In the general case, we simply set the value of the parameter $s$ to be the largest possible value. That is, we still have $n\ge32^4cu^{5s+13}K$, but the same formula is violated if $s$ is increased by one. Note that increasing $s$ to $s^*=s+1$ changes the values of $u$ to $u^*=8u$ and $K$ to $K^*=512^s\cdot8^{s+1\choose2}u^sK$. We have $n<32^4c{u^*}^{5s^*+13}K^*=2^{O(s^2)}u^{O(s)}K$. We still have $n/K=2^{O(s^2)}u^{O(s)}=n^{O(xs+s^2/\log n)}=n^{O(\sqrt x)}$, as needed. Finally, if even the choice of $s=1$ is not feasible, then finding any one edge of the graph suffices.
\end{proof}

Note that a complete graph on $n$ vertices has $n^{2-x}$ edges for $x=\Theta(1/\log n)$. Thus, part (i) of Theorem~\ref{Thm:Main} would follow as a special case of part~(ii), if not for the $\eps>(\log n)^{-2/3}$ requirement there. Because of this, part~(ii) directly implies the existence of only $n/2^{O((\log n)^{2/3})}$ pairwise crossing segments determined by $n$ points in the plane in general position. To obtain the slightly stronger estimate claimed in part~(i) of Theorem~\ref{Thm:Main} we have to do the calculations again for the special case of the complete geometric graph. Here we do not have to worry about maintaining the density condition. We can apply Lemmas~\ref6 and \ref{4+} as stated, but we have to prove an analogue of Lemma~\ref7.

\begin{lemma}\label{complete}
Let $s$ be a positive integer and set $K=8^{s\choose2}$, $M=9^sK$, $\eps=2^{-3s-11}$. Suppose the $M$-element point sets $A$ and $B$ form a $\eps$-avoiding pair and $A\cup B$ is in general position. Then we can find $K$ pairwise crossing segments, each connecting a point of $A$ to a point of $B$.
\end{lemma}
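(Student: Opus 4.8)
The plan is to mirror the proof of Lemma~\ref7, running the same induction on $s$ but with cleaner parameters made possible by the fact that we no longer need to track a density parameter $\delta$. I would set up the induction so that the inductive hypothesis is exactly the statement of Lemma~\ref{complete} for $s-1$, i.e.\ that any $\eps'$-avoiding pair of $M'$-element sets in general position (with $M'=9^{s-1}K'$, $K'=8^{s-1\choose2}$, $\eps'=2^{-3(s-1)-11}$) yields $K'$ pairwise crossing segments between the two sides. The base case $s=1$ is trivial: $K=8^0=1$, $M=9$, and any single segment between a point of $A$ and a point of $B$ works (here the general position and $\eps$-avoiding hypotheses are not even needed).

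For the inductive step I would invoke Lemma~\ref{4+} with $k=8^{s-1}$ (so that $kK'=K$), $m=M_{s-1}=9^{s-1}K'$, and $t=8$, hence $\delta=1/8$ and $\eps_{\mathrm{L}\ref{4+}}=1/(32\cdot 64\cdot 8^{s-1})=1/(2^{11}\cdot 2^{3(s-1)})=2^{-3s-8}$. The hypothesis of Lemma~\ref{4+} requires that $\{A,B\}$ be an $8\delta$-dense, $\eps_{\mathrm{L}\ref{4+}}\cdot\delta$-avoiding pair of $(t+1)km$-element sets. Since here $8\delta=1$ and the graph is complete, the density condition $|E(A,B)|\ge (t+1)km)^2$ holds trivially. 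The avoiding condition needs $\eps\le \eps_{\mathrm{L}\ref{4+}}/8 = 2^{-3s-11}$, which is exactly our choice of $\eps$; and the size condition needs $M=(t+1)km=9\cdot 8^{s-1}\cdot 9^{s-1}K' = 9^s\cdot 8^{s-1}\cdot 8^{s-1\choose2}$. One checks $8^{s-1}\cdot 8^{s-1\choose 2}=8^{s\choose 2}=K$, so indeed $M=9^s K$, matching the statement. Lemma~\ref{4+} then produces pairwise disjoint $m$-element $A_1,\dots,A_k\subset A$ and $B_1,\dots,B_k\subset B$ such that each $\{A_i,B_i\}$ is a $(1/8)$-dense, $\eps_{\mathrm{L}\ref{4+}}$-avoiding pair, and every edge of $E(A_i,B_i)$ crosses every edge of $E(A_j,B_j)$ for $i\ne j$.

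Now I apply the inductive hypothesis to each pair $\{A_i,B_i\}$. This requires verifying that $\{A_i,B_i\}$ is $\eps'$-avoiding with $\eps'=2^{-3(s-1)-11}=2^{-3s-8}$; but each $\{A_i,B_i\}$ is $\eps_{\mathrm{L}\ref{4+}}$-avoiding with $\eps_{\mathrm{L}\ref{4+}}=2^{-3s-8}=\eps'$, so this holds with equality, exactly as in the proof of Lemma~\ref7. Also $|A_i|=|B_i|=m=M_{s-1}$ and $A_i\cup B_i\subseteq A\cup B$ is in general position. The induction yields, for each $i$, a family $Z_i$ of $K'=8^{s-1\choose 2}$ pairwise crossing segments inside $E(A_i,B_i)$. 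Since segments from $E(A_i,B_i)$ cross all segments from $E(A_j,B_j)$ when $i\ne j$, the union $\bigcup_{i=1}^k Z_i$ is a family of $kK'=8^{s-1}\cdot 8^{s-1\choose 2}=8^{s\choose 2}=K$ pairwise crossing segments, each joining a point of $A$ to a point of $B$. This completes the induction.

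The step I expect to be the only real point of friction is the bookkeeping of the exponents: making the three chained identities $kK'=K$, $(t+1)km=M$, and $\eps_{\mathrm{L}\ref{4+}}=\eps'$ all come out on the nose with the stated constants $K=8^{s\choose2}$, $M=9^sK$, $\eps=2^{-3s-11}$. The choice $t=8$ is what makes $8\delta=1$ (so that completeness trivially supplies the density needed by Lemma~\ref{4+}) while keeping $\delta$-density of the sub-pairs at the harmless level $1/8$; everything else is a direct transcription of the argument for Lemma~\ref7 with the $\delta$-tracking deleted. No genuine new idea is required beyond noticing that in the complete-graph setting one can fix $t$ to a constant.
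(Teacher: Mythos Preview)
Your proposal is correct and follows exactly the paper's proof: induction on $s$, applying Lemma~\ref{4+} with $t=8$, $k=8^{s-1}$, $m=9^{s-1}\cdot8^{s-1\choose2}$ on the complete geometric graph (so that $8\delta=1$ makes the density condition automatic), then invoking the inductive hypothesis on each pair $\{A_i,B_i\}$ and taking the union. Your bookkeeping of the exponents matches the paper's choices (the paper writes the avoiding parameters as $2^{-14}/k$ and $2^{-11}/k$, which are your $2^{-3s-11}$ and $2^{-3s-8}$), and nothing is missing.
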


\begin{proof} We prove the lemma by induction on $s$. For $s=1$ we have $K=1$, so the statement is trivial.

If $s>1$ we apply Lemma~\ref{4+} with $t=8$, $k=8^{s-1}$ and $m=M/(9k)$ to the complete geometric graph on $A\cup B$. As $A$ and $B$ form a $\left(2^{-14}/k\right)$-avoiding $1$-dense pair, the lemma finds us pairwise disjoint $m$-element subsets $A_i\subset A$ and $B_i\subset B$, for $1\leq i\leq k$, such that $\{A_i,B_i\}$ is $2^{-11}/k$-avoiding for all $i$ and each edge in $E(A_i,B_i)$ crosses all edges in $E(A_j,B_j)$ whenever $i\ne j$. We apply the inductive hypothesis for $s-1$ in place of $s$ separately for each pair $\{A_i,B_i\}$. This results in a subset $Z_i$ of $E(A_i,B_i)$ consisting of $K^*=8^{s-1\choose2}$ pairwise crossing edges. Their union, $\cup_{i=1}^kZ_i$ is a subset of $E(A,B)$ consisting of $kK^*=K$ pairwise crossing edges, as claimed.
\end{proof}

\begin{proof}[Proof of Theorem~\ref{Thm:Main}(i)]
We assume $n\ge3$.
Let us choose $s$ to be the smallest positive integer such that $V$ \emph{does not} determine $K=8^{s\choose2}$ pairwise crossing segments. Note that $s>1$, as $V$ determines at least $8^{1\choose2}=1$ pairwise crossing segments. By Lemma~\ref{complete}, we do not have size $M=9^sK$ subsets $A$ and $B$ of $V$ forming an $\eps$-avoiding pair with $\eps=2^{-3s-11}$. The complete geometric graph on the vertex set $V$ has $n$ vertices and $\delta n^2$ edges with $\delta=(n-1)/(2n)\ge1/3$. Applying Lemma~\ref6 to this graph yields $n=O(M\eps^{-4}\delta^{-5})=2^{O(s)}K$.

By the choice of $s$, $V$ determines at least $K^*=8^{s-1\choose2}=K/2^{O(s)}$ pairwise crossing segments. Therefore, $n>K^*=2^{s-1\choose2}$, so we must have $s=O(\sqrt{\log n})$. But we also have $K^*=K/2^{O(s)}=n/2^{O(s)}=n/2^{O(\sqrt{\log n})}$, as claimed.
\end{proof}

\section{Discussion and related results}\label{Sec:Discuss}

\begin{itemize}
\item Theorem \ref{Thm:Main} represents a substantial step towards characterizing the intersection structure of the edges in a geometric  graph. We hope that further progress in this direction would facilitate the solution other unsolved problems in combinatorial and computational geometry. The two most notorious questions of this kind are the following. Determine (1) the maximum number of halving lines of a set of $n$ points in $\reals^2$, and (2) the maximum number of incidences that can occur between $n$ points and $m$ pseudo-algebraic\footnote{We say that a family of curves in $\reals^2$ is pseudo-algebraic if any two of the curves intersect at most a fixed number of times. In particular, this includes families of bounded-degree algebraic curves.} curves in $\reals^2$. Note that the best known general upper bounds for these problems \cite{Dey,PachSharir} are obtained by applying the Crossing Lemma to a suitable geometric or topological graph. Following the pioneering work of Dvir~\cite{Dv10}, Guth and Katz~\cite{GK10, GK15}, several of these questions have been revisited from an algebraic perspective \cite{Zarank,KMS,ShZ}.

\item A closely related line of work \cite{KaLa,Kupitz,Pinchasi,Va3} concerns {\it pairwise evading edges} in geometric graphs. We say that a pair of segments in the plane are {\it evading} if (the closure of) neither of them is crossed by the supporting line of the other segment.
Note that the methods of Aronov {\em et al.}~\cite{CrossingFamilies} imply that any set of $n$ points in general position in the plane determines $\Omega\left(\sqrt{n}\right)$ pairwise evading segments. Analogously, our techniques yield the following result.


\begin{theorem}\label{Theorem:Parallel}
(i) Any set $V$ of $n$ points in general position in the plane determines at least $n/2^{O(\sqrt{\log n})}$ pairwise evading segments.

(ii) There exists an absolute constant $c$ such that any geometric graph with $n$ vertices and at least $n^{2-\eps}$ edges, for some $\eps>(\log n)^{-2/3}$, has at least $n^{1-c\sqrt\eps}$ pairwise evading edges.
\end{theorem}

Part (i) of Theorem \ref{Theorem:Parallel} easily follows from Theorem \ref{Thm:Main} via a reduction in \cite[Theorem 2]{CrossingFamilies}, while both parts can be derived using the following slight modification of Lemma \ref{4+}: For any separated $8\delta$-dense, $\eps\delta$-avoiding pair $(A,B)$ with $|A|=|B|=(t+1)km$, as in Lemma \ref{4+}, there exist $m$-element subsets $A_1,\ldots, A_k\subset A$, $B_1,\ldots, B_k\subset B$ so that for every $1\leq i\leq k$, the pair $\{A_i,B_i\}$ is $\delta$-dense and $\eps$-avoiding, and, for $1\leq i<j\leq k$, all edges in $E(A_i,B_i)$ evade every edge in $E(A_j,B_j)$.

To verify this statement, we closely follow the  proof of Lemma \ref{4+}. Specifically, we consider the partial orders $(A,<_B)$ and $(B,<_A)$, and use the following geometric property: if $x<_B y$ for $x,y\in A$ and $z<_A w$ for $z,w\in B$, then the segments $xw$ and $yz$ are evading, because $x,y,w$, and $z$ form a convex quadrilateral. Recall that Lemma \ref{4+} yielded $m$-size subsets  $C_1<_B C_2<_B\ldots <_B C_{tk}$ and $D_1<_A D_2<_A\ldots <_A D_{tk}$ of $A$ and $B$, respectively. Note that whenever $1\leq i<j\leq tk$ and $1\leq i'<j'\leq tk$, all edges in $E(C_i,D_{j'})$ evade every edge in $E(C_j, D_{i'})$; see Figure \ref{Fig:Parallel}. Now we apply Mirsky's theorem to the {\it modified} partial order, in which $(C_a,D_b)<(C_{a'},D_{b'})$ if and only if $a<a'$ and $b>b'$.

\begin{figure}[htbp]
\begin{center}
\input{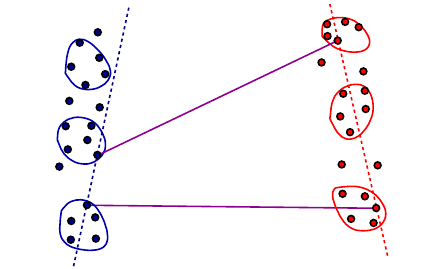_t}
\caption{\small Proof of Theorem \ref{Theorem:Parallel}. All edges in $E(C_i,D_{j'})$ evade every edge in $E(C_j, D_{i'})$, whenever $C_i<_B C_j$ and $D_{i'}<_A D_{j'}$.}
\label{Fig:Parallel}
\end{center}
\end{figure}

\item Our proof of Theorem \ref{Thm:Main}, just like the arguments of Aronov et al.\ \cite{CrossingFamilies}, crucially relies on the assumption that the edges of our graph are straight-line segments. In particular, the decomposition provided by Lemma \ref{5} is based on the machinery that was developed in computational geometry to support efficient searching in arrangements of lines and hyperplanes. Extending Theorem \ref{Thm:Main} to more general families of topological graphs (e.g., whose edges are contained in fixed-degree algebraic curves, or any two of whose edges cross a bounded number of times) would require new ideas. Most of the previous lower bounds of this kind are based on Ramsey-type results applied to the intersection graph of the edge set \cite{Semi,FP08,FP10,FP12,FP12a,FP14,FPT11}.

For example, Fox and Pach \cite{FP10} argued that, with a suitable constant $c>0$, any $n$-vertex topological graph in the plane with at least $n\log^{c\log s}n$ edges must contain $s$  pairwise crossing edges all of whose vertices are distinct. In other words, using the terminology introduced in the Introduction, any $s$-quasi-planar topological graph has at most $n\log^{O(\log s)}n$ edges.
Their recursive analysis rests on the crucial observation that for any topological graph $G=(V,E)$ at least $\epsilon |E|^2$ crossing pairs of edges, there exist subsets $E_1,E_2\subseteq E$ of cardinality $t=\epsilon^{O(1)}|E|/\log n$ such that every edge in $E_1$ crosses every edge in $E_2$.
Furthermore, if we assume that every pair of edges in $E$ cross a bounded number of times (e.g., if they are bounded-degree semi-algebraic curves~\cite{Semi}), we can set $t=\epsilon^{O(1)} |E|$ \cite{FPT11}.
This is a special case of the following  general Tur\'{a}n-type phenomenon: any sufficiently dense string graph (in particular, the intersection graph of the edge set $E$) must contain a copy of $K_{t,t}$ \cite{FP12}.

It is possible that several of the above bounds can be improved by a careful divide-and-conquer scheme applied to the vertex set $V$, in the spirit of our proof of Theorem \ref{Thm:Main}.

\item Our proof of Theorem \ref{Thm:Main} is {\em fully constructive}. As the primary focus of this study is on the combinatorial aspects of geometric graphs, we did not seek to optimize the construction cost of our family of pairwise crossing edges. Nevertheless, our argument yields an algorithm whose running time is $O\left(n^{2+O(\sqrt{x})}\right)$ if the input graph $(V,E)$ has at least $n^{2-x}$ edges (for $x\geq (\log n)^{-2/3}$). Specifically, the $O\left(\frac{1}{\eps^2}\right)$-size line set $L$ of Lemma \ref{5} can be computed in time $O\left(\frac{1}{\eps^2}+\frac{n}{\eps}\right)$, using the deterministic algorithm of Chazelle \cite{Chazelle} for finding an optimal cutting of the dual plane. The $\eps$-avoiding $\delta$-dense pair $\{A,B\}$ (together with the posets $(A,<_B)$ and $(B,<_A)$) in Lemma \ref{6} can be computed, using a naive implementation, in $O\left(\frac{n^2}{\eps^4\delta^4}\right)$ time. The initial decomposition $\{C_a\}$ and $\{D_b\}$ of the sets $A$ and $B$ in Lemma \ref{4+} can be obtained (as designated in Lemma \ref{3}) via a simple topological sorting of the posets $(A,<_B)$ and $(B,<_A)$, and in time that is at most quadratic in $|A|=|B|$. The induced incomparability graphs $\iota(C_a,<_{D_b})$ (and, therefore, the eligible pairs $(C_a,D_b)$) can be determined in time $O\left((tk)\cdot(\eps\delta |A|^2)\right)=O(|A|^2)$. The desired longest chain of eligible pairs can be obtained in $O\left(t^4k^4\right)$ time (again, via the standard topological sorting of their poset \cite{Cormen}).
\end{itemize}

\paragraph{Acknowledgements.} An extended abstract of this paper has appeared in the Proceedings of the {51st Annual ACM Symposium Theory Computing (STOC 2019)}, pp. 1158--1166.
The authors would like to thank the anonymous referees for their valuable suggestions which helped to improve the presentation. In particular, we would like to thank one of the referees for bringing the matter of evading edges to our attention.


\begin{thebibliography}{}


\bibitem{Eyal2} E. Ackerman, On the maximum number of edges in topological graphs with no four pairwise crossing edges, {\it Discrete Comput. Geom.}  {\bf 41} (2009), 365 -- 375.

\bibitem{Eyal1} E. Ackerman and G. Tardos, On the maximum number of edges in quasi-planar graphs, {\it J. Combinatorial Theory, Ser. A.}, {\bf 114} (3) (2007), 563 -- 571.

\bibitem{AAPP} P.~K.~Agarwal, B.~Aronov, J.~Pach, R.~Pollack, and M.~Sharir, Quasi-planar graphs have a linear number of edges, {\em Combinatorica} {\bf 17} (1997), no. 1, 1--9.

\bibitem{AgMu}  P.~K.~Agarwal and N.~H.~Mustafa, Independent set of intersection graphs of convex objects in 2D, {\em  Comput. Geom.} {\bf 34} (2006), no. 2, 83--95.

\bibitem{Ai} O. Aichholzer, personal communication.\\ See {\tt http://www.eurogiga-compose.eu/posezo.php}

\bibitem{Crossing1} M. Ajtai, V. Chv\'atal, M. Newborn, E. Szemer\'edi, Crossing-free subgraphs. In: {\em Theory and Practice of Combinatorics}, North-Holland Mathematics Studies {\bf 60}, North-Holland, Amsterdam, 1982, pp. 9–-12.

\bibitem{Semi} N. Alon, J. Pach, R. Pinchasi, R. Radoi\v ci\'c, and M. Sharir, Crossing patterns of semi-algebraic sets, {\it J. Combin. Theory Ser. A.} {\bf 111} (2005), No. 2, 310--326.

\bibitem{CrossingFamilies} B. Aronov, P. Erd\H{o}s, W. Goddard, D. J. Kleitman, M. Klugerman, J. Pach, and L. J. Schulman, Crossing families, {\it Combinatorica} {\bf 14} (1994), No. 2, 127--134. Also in: {\it Proc. 6th ACM Symp. Comput. Geom.}, 1991.

\bibitem{Book}  P. Brass, W. Moser, and J. Pach: {\em Research Problems in Discrete Geometry}, Springer-Verlag, New York, 2005.

\bibitem{CaCa}  S.~Cabello, J.~Cardinal, and S.~Langerman: The clique problem in ray intersection graphs, {\em Discrete Comput. Geom.} {\bf 50} (2013), no. 3, 771--783.

\bibitem{CPS}  J.~Cardinal, M.~S.~Payne, and N.~Solomon, Ramsey-type theorems for lines in 3-space, {\em Discrete Math. Theor. Comput. Sci.} {\bf 18} (2016), no. 3, Paper No. 14, 16 pp.



\bibitem{Quasi} B. Chazelle, M. Sharir and E. Welzl,
Quasi-optimal upper bounds for simplex range searching and new zone theorems, {\it Algorithmica} {\bf 8} (no. 5 and 6) (1992), 407 -- 429.f

\bibitem{Chazelle} B. Chazelle,
Cutting hyperplanes for divide-and-conquer, {\it Discrete Comput. Geom.} {\bf 9},  145 -- 158.

\bibitem{CF}  B.~Chazelle and J.~Friedman, A deterministic view of random sampling and its use in geometry, {\em Combinatorica} {\bf 10} (1990), no. 3, 229--249.


\bibitem{Cormen} T. H. Cormen, C. E. Leiserson, R. L. Rivest, and C. Stein, {\it Introduction to Algorithms}, MIT Press, Cambridge, MA, 2001.

\bibitem{Dey} T. K. Dey, Improved bounds for planar $k$-sets and related problems, {\em Discrete Comput. Geom.} {\bf 19} (1998), no. 3, 373--382.


\bibitem{Dv10} Z. Dvir, Incidence theorems and their applications, {\em Found. Trends Theor. Comput. Sci.} {\bf 6} (2010), no. 4, 257--393 (2012).

\bibitem{EvSa} W. Evans and N. Saeedi, On problems related to crossing families, {\sf https://arxiv.org/abs/1906.00191}.

\bibitem{Overlap} J. Fox, M. Gromov, V. Lafforgue, A. Naor, and J. Pach, Overlap properties of geometric expanders, {\it J. Reine Ang. Math. (Crelle's Journal)} {\bf 671} (2012), 49--83.

\bibitem{SemiRegularity} J. Fox, J. Pach and A. Suk, Density and regularity theorems for semi-algebraic hypergraphs, in: {\em Proc. 26th Annual ACM-SIAM Symposium on Discrete Algorithms (SODA 2015)}, SIAM, San Diego, California, 2015, 1517--1530.

\bibitem{Zarank} J. Fox, J. Pach, A. Sheffer, A. Suk, and J. Zahl,
A semi-algebraic version of Zarankiewicz's problem,
{\it J. Europ. Math. Soc.} {\bf 19} (2017), 1785 -- 1810.





\bibitem{FP08} J. Fox and J. Pach, Separator theorems and Tur\'{a}n-type results for planar intersection graphs, {\it Adv. Math.} 219 (2008), 1070--1080.



\bibitem{FP10} J. Fox and J. Pach, A separator theorem for string
graphs and its applications, {\it Combin. Probab. Comput.}
{\bf 19} (2010), 371--390.

\bibitem{FP11} J.~Fox and J.~Pach, Computing the independence number of intersection graphs, in: {\em Proc. 22nd Ann. ACM-SIAM Symp. on Discrete Algorithms (SODA 2011)}, SIAM, Philadelphia, PA, 2011, 1161--1165.



\bibitem{FP12} J. Fox and J. Pach, String graphs and incomparability
graphs, {\it Adv. Math.} {\bf 230} (2012), 1381--1401.

\bibitem{FP12a} Jacob Fox, János Pach:
Coloring $K_k$-free intersection graphs of geometric objects in the plane, {\it Eur. J. Comb.} {\bf 33} (5) (2012), 853--866.

\bibitem{FP14} J. Fox and J. Pach,
Applications of a New Separator Theorem for String Graphs, {\it Comb. Probab. Comput.} {\bf 23} (1) (2014), 66--74.

\bibitem{VCdim} J. Fox, J. Pach, and A. Suk, Erd\H os-Hajnal conjecture for graphs with bounded VC-dimension, to appear in: {\it Discrete Comput. Geom., SoCG 2017 Special Issue}.

\bibitem{FPT11} J. Fox, J. Pach, and C. D. T\'{o}th, Intersection patterns
of curves, {\it J. London Math. Soc.} {\bf 83} (2011), 389--406.

\bibitem{PolynomialMethod} L. Guth, Polynomial methods in combinatorics, University Lecture Series, Vol. 64, AMS, 2016; 273 pp.

\bibitem{GK10} L. Guth and N. H. Katz, Algebraic methods in discrete analogues of the Kakeya problem, {\em Adv. Math.} {\bf 225} (2010), no. 5, 2828--2839.

\bibitem{GK15} L. Guth and N. H. Katz, On the Erd\H os distinct distances problem in the plane, {\em Ann. of Math. (2)} {\bf 181} (2015), no. 1, 155--190.

\bibitem{HW} D.~Haussler and E.~Welzl, $\varepsilon$-nets and simplex range queries, {\em Discrete Comput. Geom.} {\bf 2} (1987), no. 2, 127--151.


\bibitem{KMS} H. Kaplan, J. Matousek and M. Sharir,
Simple proofs of classical theorems in discrete geometry via the Guth-Katz polynomial partitioning technique, {\it Discrete Comput. Geom.} {\bf 48} (3) (2012),  499 -- 517.

\bibitem{KaLa} M. Katchalski and H. Last, On geometric graphs with no two edges in convex position, {\it Discrete Comput. Geom.} {\bf 19} (1998), no. 3, 399 - 404.



\bibitem{Chaya} C. Keller, S. Smorodinsky, and G. Tardos,
On Max-Clique for intersection graphs of sets and the Hadwiger-Debrunner numbers, in {\it Proc. 28th Annu. Symp. Discrete Algs. (SODA)}, 2017, pp. 2254 -- 2263.

\bibitem{KrM} J.~Kratochv\'{\i}l and J.~Matou\v sek, Intersection graphs of segments, {\em J. Combin. Theory Ser. B} {\bf 62} (1994), no. 2, 289--315.

\bibitem{KrNe}  J.~Kratochv\'{\i}l and J.~Ne\v set\v ril, INDEPENDENT SET and CLIQUE problems in intersection-defined classes of graphs, {\em Comment. Math. Univ. Carolin.} {\bf 31} (1990), no. 1, 85--93.

\bibitem{Kupitz} Y. Kupitz, On pairs of disjoint segments in convex position in the plane, {\it Ann. Discrete Math.} {\bf 20} (1984), 203 -- 208.

\bibitem{Ky}  J.~Kyn\v cl, Ramsey--type constructions for arrangements of segments, {\em European J. Combin.} {\bf 33} (2012), no. 3, 336--339.

\bibitem{LMPT} D. Larman, J.~Matou\v sek, J.~Pach, and J.~T\"or\H ocsik, A Ramsey-type result for convex sets, {\em Bull. London Math. Soc.} {\bf 26} (1994), no. 2, 132--136.

\bibitem{Crossing2} T. Leighton, Complexity Issues in VLSI, {\em Foundations of Computing Series}, MIT Press, Cambridge, 1983.

\bibitem{Matousek} J. Matou\v{s}ek, Lectures on Discrete Geometry, Springer-Verlag New York, Inc., Secaucus, NJ, 2002.

\bibitem{M} J.~Matou\v sek, Efficient partition trees, {\it Discrete Comput. Geom.} {\bf 8} (1992), no. 3, 315--334. Also in: {\em Proc.~ACM Symposium on Computational Geometry} (North Conway, NH, 1991), 1--9.

\bibitem{Dilworth} L. Mirsky, A dual of Dilworth's decomposition theorem, {\it American Mathematical Monthly} {\bf 78} (8) (1971), 876--877.

\bibitem{PRTadv} J. Pach, N. Rubin, and G. Tardos, A crossing lemma for Jordan curves, {\it Adv. Math.} {\bf 331} (2018), 908--940.



\bibitem{PachSharir}  J. Pach and M. Sharir, On the number of incidences between points and curves, {\em Combin. Probab. Comput.} {\bf 7} (1998), no. 1, 121--127.



\bibitem{PKK}  A.~Pawlik, J.~Kozik, T.~Krawczyk, M.~Laso\'n, P.~Micek, W.T.~Trotter, and B.~Walczak, Triangle-free intersection graphs of line segments with large chromatic number, {\em J. Combin. Theory Ser. B} {\bf 105} (2014), 6--10.

\bibitem{Pinchasi} R. Pinchasi, Geometric graphs with no two parallel edges, {\it Combinatorica} {\bf 28}
(2008), no. 1, 127 -- 130.



\bibitem{ShZ} M. Sharir and J. Zahl,
Cutting algebraic curves into pseudo-segments and applications, {\it J. Comb. Theory Ser. A} {\bf 150} (2017), 1 -- 35.

\bibitem{SukWal} A. Suk and B. Walczak,
New bounds on the maximum number of edges in k-quasi-planar graphs, {\it Comput. Geom.} 50 (2015), 24--33.

\bibitem{Szekely} L. A. Sz\'ekely, Crossing numbers and hard Erd\H os problems in discrete geometry, {\it Combin. Probab. Comput.} {\bf 6} (1997), no. 3, 353--358.

\bibitem{SzT}  E.~Szemer\'edi and W.~T.~Trotter, Extremal problems in discrete geometry, {\em Combinatorica} {\bf 3} (1983), no. 3--4, 381--392.

\bibitem{T}  I.~Tomon, Tur\'an-type results for complete $h$-partite graphs in comparability and incomparability graphs, {\it Order} {\bf 33} (2016), no. 3, 537--556.

\bibitem{Va1} P.~Valtr, Lines, line-point incidences and crossing families in dense sets, {\em Combinatorica} {\bf 16} (1996), no. 2, 269--294.

\bibitem{Va2} P.~Valtr, On mutually avoiding sets, in: {\em The Mathematics of Paul Erd\H os, II. Algorithms Combin.} {\bf 14}, Springer, Berlin, 1997, 324--332.

\bibitem{Va3} P.~Valtr, On geometric graphs with no $k$ pairwise parallel edges. Dedicated to the memory of Paul Erd\H os, {\em Discrete Comput. Geom.} {\bf 19} (1998), no. 3, Special Issue, 461--469.
\end{thebibliography}
\end{document}